\providecommand{\U}[1]{\protect\rule{.1in}{.1in}}
\newtheorem{theorem}{Theorem}
\newtheorem{corollary}[theorem]{Corollary}
\newtheorem*{problem*}{Problem 1}
\newtheorem{proposition}[theorem]{Proposition}
\newtheorem{remark}[theorem]{Remark}
\begin{document}

\title{Remarks on $L^{p}$-vanishing results in geometric analysis}

\date{\today}

\author{Stefano Pigola}
\address{Dipartimento di Fisica e Matematica\\
Universit\`a dell'Insubria - Como\\
via Valleggio 11\\
I-22100 Como, Italy.}
\email{stefano.pigola@uninsubria.it}

\author{Giona Veronelli}
\address{D\'epartement de Math\'ematiques
Universit\'e de Cergy-Pontoise\\
Site de Saint Martin
2, avenue Adolphe Chauvin
95302 Cergy-Pontoise Cedex
France}
\email{giona.veronelli@gmail.com}

\subjclass[2010]{53C21, 53C42, 53C24}

\keywords{Vanishing and rigidity results, minimal hypersurfaces, finite total curvature, locally conformally flat}

\begin{abstract}
We survey some $L^{p}$-vanishing results for solutions of Bochner or Simons type
equations with refined Kato inequalities, under spectral assumptions on the
relevant Schr\"{o}dinger operators. New aspects are included in the picture. In particular,
an abstract version of a structure theorem for stable minimal hypersurfaces
of finite total curvature is observed. Further geometric applications are discussed.

\end{abstract}

\maketitle

\tableofcontents
\section{Introduction and some vanishing results}

This paper originates from an attempt to understand the abstract content of a
structure theorem for stable minimal hypersurfaces of finite total scalar curvature.

\begin{theorem}
\label{th_CSZ}Let $f:M^{m}\rightarrow\mathbb{R}^{N}$ be a complete,
$m$-dimensional, minimal submanifold, $m\geq2$. Assume that its second
fundamental form $\mathrm{II}$ satisfies

\begin{itemize}
\item[(i)] $\left\vert \mathrm{II}\right\vert ^{2}\in L^{m/2}\left(  M\right)
$

\item[(ii)] The \textquotedblleft stability operator\textquotedblright%
\ $\mathcal{L}=-\Delta-\left\vert \mathrm{II}\right\vert ^{2}$ has
non-negative spectrum.
\end{itemize}

\noindent Then, $\left\vert \mathrm{II}\right\vert \equiv0$, that is,
$f\left(  M\right)  $ is an affine $m$-plane .
\end{theorem}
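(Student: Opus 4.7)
The plan is to translate the theorem into an $L^{p}$-vanishing statement for the scalar function $u := |\mathrm{II}|$, using three ingredients: a Bochner--Simons type differential inequality for $u$, the stability inequality equivalent to $\sigma(\mathcal L)\subset[0,+\infty)$, and hypothesis (i), which amounts to $u \in L^{m}(M)$. First, Simons' identity for minimal submanifolds of $\mathbb{R}^{N}$ yields
\[
\tfrac{1}{2}\Delta u^{2} \;=\; |\nabla \mathrm{II}|^{2} + \langle \mathrm{II},\Delta \mathrm{II}\rangle \;\geq\; |\nabla \mathrm{II}|^{2} - c_{m,N}\,u^{4},
\]
for some explicit $c_{m,N}>0$. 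The refined Kato inequality for minimal submanifolds, $|\nabla \mathrm{II}|^{2}\geq(1+K_{m})|\nabla u|^{2}$ with $K_{m}>0$ (equal to $2/m$ in the hypersurface case), then upgrades this to the weak differential inequality
\[
u\,\Delta u \;\geq\; K_{m}\,|\nabla u|^{2} - c_{m,N}\,u^{4}.
\]

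Next, the non-negative spectrum of $\mathcal L$ is equivalent to the Hardy-type inequality
\[
\int_{M} u^{2}\varphi^{2}\,dV \;\leq\; \int_{M}|\nabla\varphi|^{2}\,dV, \qquad \varphi\in\mathrm{Lip}_{c}(M).
\]
I would test this with $\varphi = u^{q}\eta_{R}$, where $\eta_{R}$ is a standard radial cut-off supported in $B_{2R}$, identically $1$ on $B_{R}$, with $|\nabla\eta_{R}|\leq C/R$, and $q\geq 1$ is a free parameter. Multiplying the differential inequality from the previous step by $u^{2q-1}\eta_{R}^{2}$, integrating by parts, expanding the square on the right-hand side of the Hardy inequality, and absorbing the cross terms via Cauchy--Schwarz with a small parameter, one arrives (for $q$ in an admissible range) at an estimate of the form
\[
A(q)\int_{M} u^{2q+2}\eta_{R}^{2} \;\leq\; B(q)\int_{M} u^{2q}\,|\nabla\eta_{R}|^{2},
\]
with $A(q)>0$ strictly, thanks to the refined Kato gain.

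To conclude, I would choose $q=\tfrac{m}{2}-1$, so that $2q+2=m$ and $2q=m-2$. H\"older's inequality gives
\[
\int u^{m-2}|\nabla\eta_{R}|^{2} \;\leq\; \left(\int_{\mathrm{supp}|\nabla\eta_{R}|} u^{m}\right)^{\!\frac{m-2}{m}}\left(\int_{M}|\nabla\eta_{R}|^{m}\right)^{\!\frac{2}{m}}.
\]
The monotonicity formula for minimal submanifolds of $\mathbb{R}^{N}$ yields $\mathrm{Vol}(B_{R})\leq C R^{m}$, hence the second factor is bounded uniformly in $R$; by hypothesis (i) the first factor tends to $0$ as $R\to\infty$. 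Monotone convergence on the left then forces $u\equiv 0$, so $\mathrm{II}\equiv 0$ and $f(M)$ is an affine $m$-plane. The borderline cases $m=2,3$, where the choice $q = m/2-1$ is too small to produce an admissible test function, would call for a slightly different exponent combined with a logarithmic cut-off.

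The main obstacle is the algebraic balance producing $A(q)>0$: one has to verify that the dimensional constants coming from Simons' identity, the refined Kato gain $K_{m}$, and the precise choice $q = m/2-1$ conspire to leave a strictly positive coefficient in front of $\int u^{m}\eta_{R}^{2}$. It is exactly here that the \emph{refined} Kato inequality, rather than the trivial $|\nabla \mathrm{II}|^{2}\geq|\nabla u|^{2}$, is indispensable: without the gain $K_{m}>0$ the scheme collapses at the critical exponent $m$ dictated by (i). The codimension enters only through the explicit values of $c_{m,N}$ and $K_{m}$, both of which remain admissible in the general case.
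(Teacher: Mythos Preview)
Your scheme is exactly the ``direct'' test-function method (\`a la Schoen--Simon--Yau and B\'erard), and the obstacle you flag at the end is not merely a technicality to be checked---it is fatal in dimensions $m\ge 6$. Carrying out the computation you outline, with $K_m=2/m$ and stability constant $H=1$, the coefficient in front of $\int u^{2q+2}\eta_R^2$ after combining the integrated Simons inequality with the stability inequality is (up to a positive factor) $K_m-(q-1)^2=\tfrac{2}{m}-(q-1)^2$. Your choice $q=\tfrac{m}{2}-1$ forces
\[
A(q)>0 \iff \left(\tfrac{m}{2}-2\right)^2<\tfrac{2}{m},
\]
which holds for $m\le 5$ and fails for every $m\ge 6$. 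The refined Kato gain $2/m$ is simply too weak to reach the critical exponent $m$ dictated by hypothesis~(i) once $m$ is large. The paper says this explicitly: the upper bound on the admissible integrability exponent ``prevented B\'erard to reach dimensions $m\ge 6$.''

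The paper's route around this obstruction is to \emph{not} attack the exponent $m$ directly. Instead, one first upgrades the decay of $u=|\mathrm{II}|$: write the Simons inequality with refined Kato as a linear inequality $\Delta u^{1+A}+b(x)u^{1+A}\ge 0$ with $1+A=(m-2)/m$ and $b=(1+A)|\mathrm{II}|^2$, and feed $b\in L^{m/2}$ plus the quadratic pointwise decay of $|\mathrm{II}|^2$ into a Bando--Kasue--Nakajima Moser-iteration argument. This yields $\sup_{M\setminus B_R}|\mathrm{II}|=O(R^{-(m-2)/(1+A)+\varepsilon})$, which is fast enough that $\int_{B_R}|\mathrm{II}|^{2H}=o(R^2)$ with $H=1$. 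One then applies the vanishing theorem at the exponent $p=H=1$, which always lies in the admissible window $(1-\sqrt{2/m},\,1+\sqrt{2/m})$. So the decay estimate trades the unreachable $L^m$ condition for the always-reachable $L^2$ one.

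A secondary issue: the monotonicity formula for minimal submanifolds gives a \emph{lower} bound on the density, hence $\mathrm{Vol}(B_R)\ge cR^m$, not the upper bound you need. The Euclidean upper volume growth $\mathrm{Vol}(B_R)=O(R^m)$ is a nontrivial consequence of the finite-total-curvature hypothesis (Anderson's structure theorem), and is itself one of the ingredients that makes the whole argument work.
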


Partial forms of this beautiful result, under various restrictions, have been
obtained by R. Schoen, L. Simon and S.T. Yau, \cite{SSY-Acta}, M.P. do Carmo
and C.K. Peng, \cite{DCP-BAMS}, \cite{DCP-Beijing} and P. B\'{e}rard,
\cite{Berard2-Remarque}. In the above general form, it is due to M. Anderson,
\cite{And-preprint}, and, Y.-B. Shen and X.-H. Zhu, \cite{SZ-AmerJour}.

In case $m=2$, it is known from previous work by Bernstein (minimal graphs),
do Carmo-Peng, \cite{DCP-BAMS}, and D. Fisher-Colbrie and Schoen (general
surfaces), \cite{FCS-CPAM}, that condition (i) is unnecessary. Actually, it
has been conjectured that this conclusion can be extended to every dimension
$m\leq7$. The upper bound $m=7$ is due to the famous non-planar (and stable)
minimal graphs by E. Bombieri, E. de Giorgi and E. Giusti. We also record
that, in higher dimensions, condition $(i)$ can be replaced by the next more
general requirement, \cite{XD-Basel}, \cite{PV-DJA},

\begin{itemize}
\item[(i)'] $\left\vert \mathrm{II}\right\vert ^{2}\in L^{\gamma}\left(
M\right)  $, for some $\gamma\geq m/2$.
\end{itemize}

As we shall see in the next section, the approaches proposed by the above
mentioned authors are very different from each others. However they have a key
analytic ingredient in common, i.e., the fact that the second fundamental form
satisfies the Simons equation%
\begin{equation}
\frac{1}{2}\Delta\left\vert \mathrm{II}\right\vert ^{2}+\left\vert
\mathrm{II}\right\vert ^{2}\left\vert \mathrm{II}\right\vert ^{2}=\left\vert
D\mathrm{II}\right\vert ^{2}\geq\left(  1+\frac{2}{m}\right)  \left\vert
\nabla\left\vert \mathrm{II}\right\vert \right\vert ^{2}. \label{simons}%
\end{equation}
The last inequality is a special case of what is known in the literature as a
\textquotedblleft refined Kato inequality\textquotedblright, \cite{Br}, \cite{CGH}. In particular,
from (\ref{simons}) it follows that%
\begin{equation}
\left\vert \mathrm{II}\right\vert \left(  \Delta\left\vert \mathrm{II}%
\right\vert +\left\vert \mathrm{II}\right\vert ^{2}\left\vert \mathrm{II}%
\right\vert \right)  -\frac{2}{m}\left\vert \nabla\left\vert \mathrm{II}%
\right\vert \right\vert ^{2}\geq0, \label{simons1}%
\end{equation}
weakly on $M$.

From the analytic viewpoint, vanishing results for $L^{p}$-solutions of PDEs,
under spectral assumptions on the relevant Schr\"{o}dinger operator, are quite
well understood, \cite{ER-ActaApplMath}, \cite{Berard1-Manuscripta},
\cite{PRS-JFA}, \cite{PRS-Book}. For instance, consider the following
statement from \cite{PRS-JFA}.

\begin{theorem}
\label{th_vanishing1}Let $\left(  M,\left\langle ,\right\rangle \right)  $ be
a complete Riemannian manifold and let $0\leq\psi\in Lip_{loc}\left(
M\right)  $ be a distributional solution of the differential inequality%
\begin{equation}
\psi\Delta\psi+a\left(  x\right)  \psi^{2}+A\left\vert \nabla\psi\right\vert
^{2}\geq0, \label{inequality}%
\end{equation}
for some $a\left(  x\right)  \in C^{0}\left(  M\right)  $ and $A\in\mathbb{R}%
$. Assume that the bottom of the spectrum of the (modified) Schr\"{o}dinger
operator $\mathcal{L}_{H}=-\Delta-Ha\left(  x\right)  $ satisfies%
\begin{equation}
\lambda_{1}\left(  \mathcal{L}_{H}\right)  \geq0, \label{spectral}%
\end{equation}
for some $H\geq A+1>0$. If%
\begin{equation}
\int^{+\infty}\frac{dR}{\int_{\partial B_{R}\left(  o\right)  }\psi^{2p}%
}=+\infty, \label{integral}%
\end{equation}
for some origin $o\in M$ and for some $A+1\leq p\leq H$, then the following hold:

\begin{enumerate}
\item[(i)] If $H\geq p>A+1$ then $\psi\equiv\mathrm{const.}$ and either
$a\left(  x\right)  \equiv0$ or $\psi\equiv0$.

\item[(ii)] If $H=A+1$ then $\psi$ satisfies (\ref{inequality}) with the
equality sign.
\end{enumerate}
\end{theorem}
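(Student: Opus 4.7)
The plan is to couple the differential inequality \eqref{inequality} with the variational consequence of the spectral assumption \eqref{spectral}, namely the Poincaré-type bound
\begin{equation*}
H\int_M a(x)\phi^2 \leq \int_M |\nabla\phi|^2, \qquad \forall\, \phi \in \mathrm{Lip}_c(M),
\end{equation*}
and then invoke the volume-type condition \eqref{integral} to let compactly supported cutoffs exhaust $M$ in a controlled fashion.

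I would first multiply \eqref{inequality} by the Lipschitz test function $\psi^{2p-2}\eta^2$ (with $\eta$ a cutoff), integrate by parts the resulting $\psi^{2p-1}\eta^2\Delta\psi$ term, and simultaneously apply the spectral inequality above to $\phi = \psi^p\eta$. Eliminating the common $a\psi^{2p}\eta^2$ term between the two identities and absorbing the cross terms in $\nabla\psi\cdot\nabla\eta$ through Young's inequality with a small parameter $\epsilon>0$ produces a Caccioppoli-type estimate
\begin{equation*}
\bigl[H(2p-1-A)-p^2-\epsilon\bigr]\int\eta^2\psi^{2p-2}|\nabla\psi|^2 \leq C_\epsilon\int \psi^{2p}|\nabla\eta|^2.
\end{equation*}
A direct analysis of the quadratic $p\mapsto H(2p-1-A)-p^2$, whose roots are $H\pm\sqrt{H(H-A-1)}$, shows that the bracketed coefficient is non-negative on $[A+1,H]$, strictly positive throughout whenever $H>A+1$, and vanishes identically in the degenerate case $H=A+1=p$.

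The second ingredient is the classical Karp--Sturm cutoff enabled by \eqref{integral}: on the annulus $B_{R'}\setminus B_R$ take
\begin{equation*}
\eta_{R,R'}(x)=\left(\int_{r(x)}^{R'}\frac{ds}{\int_{\partial B_s}\psi^{2p}}\right)\!\bigg/\!\left(\int_{R}^{R'}\frac{ds}{\int_{\partial B_s}\psi^{2p}}\right),
\end{equation*}
extended by $1$ on $B_R$ and by $0$ outside $B_{R'}$. The coarea formula bounds $\int_M \psi^{2p}|\nabla\eta_{R,R'}|^2$ by the reciprocal of $\int_R^{R'}ds/\int_{\partial B_s}\psi^{2p}$, which tends to $0$ as $R'\to\infty$. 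Whenever the bracketed coefficient is strictly positive, passing to the limit forces $\nabla\psi\equiv 0$, hence $\psi\equiv\mathrm{const.}$; substituting a positive constant back into \eqref{inequality} gives $a\geq 0$ pointwise, while the Poincaré inequality tested against $\eta_{R,R'}$ forces $\int a\, \eta_{R,R'}^2\to 0$ and thus $a\equiv 0$. This establishes (i). For (ii) the coefficient vanishes identically, so every inequality in the chain must saturate in the limit, yielding \eqref{inequality} with the equality sign. The main technical obstacle I expect is the precise cutoff construction together with the bookkeeping through the borderline regime $H=A+1$, where no residual dissipation is available to promote the Caccioppoli estimate into a pointwise vanishing; the rigidity statement there must be squeezed out of the fact that all the Young and Cauchy--Schwarz steps are forced into equality.
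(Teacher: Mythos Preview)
Your approach is the \emph{direct method}: use the variational form of \eqref{spectral} with the test function $\phi=\psi^p\eta$, combine with the weak form of \eqref{inequality} against $\psi^{2p-2}\eta^2$, and extract a Caccioppoli inequality. This is \emph{not} the route the paper takes for Theorem~\ref{th_vanishing1}. There the spectral assumption is interpreted, via Fischer--Colbrie--Schoen, as the existence of a smooth $\varphi>0$ with $\mathcal{L}_H\varphi=0$; one then sets $u=\psi^p/\varphi^{p/H}$, checks that $u$ is subharmonic for the diffusion operator $\Delta_\omega$ with weight $\omega=\varphi^{2p/H}$, and invokes a weighted Yau-type theorem for $L^2$ subharmonic functions (the condition \eqref{integral} being exactly what that theorem needs). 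The paper does carry out essentially your direct computation, but \emph{as the proof of a different statement}, Theorem~\ref{th_direct}, under the stronger hypothesis $\int_{B_R}\psi^{2p}=o(R^2)$ with standard cutoffs. Your contribution is to marry the direct method with the Karp--Sturm cutoff so as to reach the sharper hypothesis \eqref{integral}; this is a legitimate improvement of Theorem~\ref{th_direct}.

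Two points, however. First, a technical one in (i): the test function $\psi^{2p-2}\eta^2$ is not admissible as written when $p<3/2$ (and in particular when $A<0$, so $p$ can be $<1$ and $\psi^{2p-2}$ is not even locally bounded). You must regularize with $(\psi+\delta)^{2p-2}$ and justify the limit $\delta\to0$; the paper does this carefully in its proof of Theorem~\ref{th_direct}.

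Second, and more substantively, your argument for (ii) has a gap. When $H=A+1$ the quadratic coefficient vanishes \emph{before} any Young step, so the phrase ``every inequality in the chain must saturate'' has no content unless you retain the defect measure $\mathcal{E}:=\psi\Delta\psi+a\psi^2+A|\nabla\psi|^2\ge0$ throughout. Doing so, the Caccioppoli estimate reduces to $\int\eta^2\psi^{2A}\,d\mathcal{E}\le C\int\psi^{2p}|\nabla\eta|^2\to0$, which only forces $\mathcal{E}=0$ on the set $\{\psi^{A}>0\}$. The paper explicitly flags this as ``one of the main drawbacks of the direct approach, when compared with Theorem~\ref{th_vanishing1}''; the FCS/Yau route gives equality in \eqref{inequality} on all of $M$, whereas the direct route does so only on $\{\psi^A>0\}$ (hence on $M$ only when $A\le0$). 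So your proposal establishes (i) in full but yields a strictly weaker (ii) than stated when $A>0$.
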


\begin{remark}
For future purposes, and for comparison with other vanishing results, we point
out the following chain of (strict) implications:%
\[
\psi\in L^{2p}\Rightarrow\int_{B_{R}}\psi^{2p}\leq CR^{2}\Rightarrow
\int^{+\infty}\frac{RdR}{\int_{B_{R}}\psi^{2p}}=+\infty\Rightarrow
\text{(\ref{integral}).}%
\]

\end{remark}

Thus, in the standard \textquotedblleft stable\textquotedblright\ case $H=1$,
one can get triviality provided a small power (say $\leq2$) of $\psi$ is
integrable. It is important to observe that large powers of $\psi$ (like $p=m/2$
in the geometric case) are not allowed.

\begin{proof}
[Proof of Theorem \ref{th_vanishing1} (sketch)]The method to prove Theorem
\ref{th_vanishing1} resembles the one used to prove the generalized maximum
principle for the operator $\mathcal{L}_{H},$ \cite{PW-Book}. Namely, following
\cite{FCS-CPAM}, one interprets (\ref{spectral}) as the existence of a smooth
solution $\varphi>0$ of the linear equation%
\[
\mathcal{L}_{H}\varphi=0.
\]
Next, we combine $\psi$ and $\varphi$ to obtain a new function%
\[
u=\frac{\psi^{p}}{\varphi^{p/H}}%
\]
which is subharmonic with respect to a suitable diffusion-type operator, i.e.,%
\[
\Delta_{\omega}u:=\omega^{-1}\operatorname{div}\left(  \omega\nabla u\right)
\geq0,
\]
with $\omega=\varphi^{2p/H}$. Assume $\psi\in L^{2p}\left(  M\right)  ,$ the
more general case where (\ref{integral}) holds true can be dealt similarly.
Since $u\in L^{2}\left(  M,\omega d\mathrm{vol}\right)  $, a version (for
diffusion operators) of the classical theorem by Yau on positive, $L^{q>1}%
$-subharmonic functions implies that $u$ is constant. The remaining
conclusions on $\psi$ now follows easily from considerations on the constants
$A,H,p$.
\end{proof}

It is worth to point out that, actually, one can obtain triviality making a
direct use of the spectral assumption with suitable test functions. This
method, which was used in \cite{SSY-Acta}, \cite{Berard1-Manuscripta}, works
particularly well in non-linear contexts, and vanishing results for the energy
density of a $p$-harmonic map are nontrivial examples, \cite{PV-GeomDedicata}.
The \textquotedblleft direct\textquotedblright\ method has a further nice
feature. It enables one to obtain vanishing for $\psi\in L^{2p}$ even if $p>H$. This is shown
in the following result that, in its full generality, has been never observed before.
\begin{theorem}
\label{th_direct}Let $\left(  M,\left\langle ,\right\rangle \right)  $ be a
complete Riemannian manifold and let $0\leq\psi\in Lip_{loc}\left(  M\right)
$ be a distributional solution of the differential inequality
(\ref{inequality}) for some $0\leq a\left(  x\right)  \in C^{0}\left(  M\right)  $
and $A\in\mathbb{R}$. Assume that the bottom of the spectrum of the (modified)
Schr\"{o}dinger operator $\mathcal{L}_{H}=-\Delta-Ha\left(  x\right)  $ satisfies
(\ref{spectral}) for some $H\geq A+1>0$. Let $p_{0}\leq H\leq p_{1}$
be the solutions of%
\[
q^{2}-2Hq+H\left(  A+1\right)  =0.
\]
If%
\[
\int_{B_{R}\left(  o\right)  }\psi^{2p}=o\left(  R^{2}\right)  \text{, as
}R\rightarrow+\infty,
\]
for some origin $o\in M$ and for some%
\[
p_{0}\leq p \leq p_{1},
\]
then
\begin{itemize}
\item[(i)] If $H>A+1$ and $p_{0} < p < p_{1}$, then $\psi\equiv\mathrm{const.}$
\item[(ii)] If $H=A+1$ then $\psi$ satisfies (\ref{inequality}) with equality sign on the open set $\Omega$ where $\psi^A>0$. In particular, it holds on $M$ if $A\leq 0$.
\end{itemize}
\end{theorem}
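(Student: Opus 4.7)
The strategy is the \emph{direct} method of \cite{SSY-Acta, Berard1-Manuscripta}: plug $\phi = \psi^p \eta$ into the variational characterization $H\int a\phi^2 \leq \int|\nabla\phi|^2$ of the spectral assumption (with $\eta \in C^\infty_c(M)$ a cut-off), in parallel with multiplying (\ref{inequality}) by the nonnegative weight $\psi^{2p-2}\eta^2$ and integrating. Expanding $|\nabla(\psi^p\eta)|^2$ gives
\begin{equation*}
H\int a\psi^{2p}\eta^2 \leq p^2\int \psi^{2p-2}|\nabla\psi|^2\eta^2 + 2p\int \psi^{2p-1}\eta\,\nabla\psi\cdot\nabla\eta + \int\psi^{2p}|\nabla\eta|^2,
\end{equation*}
while integrating by parts the $\psi^{2p-1}\eta^2\Delta\psi$ term coming from (\ref{inequality}) produces
\begin{equation*}
\int a\psi^{2p}\eta^2 \geq (2p-1-A)\int \psi^{2p-2}|\nabla\psi|^2\eta^2 + 2\int \psi^{2p-1}\eta\,\nabla\psi\cdot\nabla\eta.
\end{equation*}
Chaining the two estimates yields
\begin{equation*}
\alpha\int \psi^{2p-2}|\nabla\psi|^2\eta^2 + 2(H-p)\int \psi^{2p-1}\eta\,\nabla\psi\cdot\nabla\eta \leq \int \psi^{2p}|\nabla\eta|^2,
\end{equation*}
with critical coefficient $\alpha = H(2p-1-A) - p^2 = -(p-p_0)(p-p_1)$.

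For case (i), the strict inequality $p_0 < p < p_1$ gives $\alpha > 0$; Young's inequality with a small parameter absorbs the cross term into $\tfrac{\alpha}{2}\int \psi^{2p-2}|\nabla\psi|^2\eta^2$ at the cost of enlarging the coefficient of $\int \psi^{2p}|\nabla\eta|^2$ on the right. Taking $\eta \equiv 1$ on $B_R$, $\eta \equiv 0$ outside $B_{2R}$, with $|\nabla\eta|\leq C/R$, the growth hypothesis $\int_{B_R}\psi^{2p} = o(R^2)$ drives the RHS to $0$ as $R\to\infty$, forcing $\psi^{p-1}\nabla\psi \equiv 0$ a.e., hence $\nabla(\psi^p) \equiv 0$ and $\psi \equiv \mathrm{const}$.

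For case (ii), $H = A+1$ collapses $p_0 = p_1 = H$, and $p = H$ is forced; both coefficients in the chained inequality then vanish. The key observation is that the slack
\begin{equation*}
I(\eta) := \int a\psi^{2p}\eta^2 - H\int\psi^{2p-2}|\nabla\psi|^2\eta^2 - 2\int\psi^{2p-1}\eta\,\nabla\psi\cdot\nabla\eta
\end{equation*}
is the distributional pairing of the nonnegative distribution $T := \psi\Delta\psi + a\psi^2 + A|\nabla\psi|^2$ with $\psi^{2p-2}\eta^2$; since $T \geq 0$, it is a nonnegative Radon measure $\mu$ by the Schwartz representation theorem (the term $\psi\Delta\psi$ being interpreted via $\psi\Delta\psi = \tfrac{1}{2}\Delta\psi^2 - |\nabla\psi|^2$). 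Chaining now squeezes $0 \leq I(\eta) = \int\psi^{2p-2}\eta^2\,d\mu \leq \tfrac{1}{H}\int\psi^{2p}|\nabla\eta|^2 \to 0$; monotone convergence as $\eta \nearrow 1$ then forces $\int\psi^{2p-2}\,d\mu = 0$, so $\mu \equiv 0$ on $\Omega = \{\psi^{2p-2} > 0\} = \{\psi^A > 0\}$ (recalling $p-1 = A$). For $A \leq 0$ the complementary set $\{\psi = 0\}$ contributes trivially, extending the equality to all of $M$.

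The principal technical obstacle is justifying $\phi = \psi^p\eta$ as a valid test function when $p < 1$ and $\psi$ vanishes, since $\psi^p$ need not be Lipschitz at the zero set of $\psi$. This is addressed by the standard regularization $\psi \leadsto \psi+\varepsilon$: derive the analogous estimates with the shifted function (the hypothesis $a \geq 0$ keeps the error terms from the shift controllable) and then let $\varepsilon \downarrow 0$.
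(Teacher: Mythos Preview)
Your argument is correct and follows the paper's own direct approach: test the spectral inequality with $\eta\psi_\varepsilon^{\,p}$ and the differential inequality with $\eta^2\psi_\varepsilon^{\,2p-2}$ (where $\psi_\varepsilon=\psi+\varepsilon$), combine into a Caccioppoli estimate with leading coefficient $2p-p^2/H-(A+1)$, and let the cut-off exhaust $M$. One refinement worth noting in case (ii): after the shift the residual cross term $2\varepsilon\int\eta\,\psi_\varepsilon^{2A}\langle\nabla\eta,\nabla\psi\rangle$ is \emph{not} controlled by the hypothesis $a\geq 0$ (that is used elsewhere, to pass from $a\psi^2\psi_\varepsilon^{2p-2}$ to $a\psi_\varepsilon^{2p}$) but rather by the regularity fact $\psi^{A+1}\in W^{1,2}_{loc}$, itself a consequence of the Kato-type structure of (\ref{inequality}) (see \cite{PRS-JFA}, \cite{PRS-Book}); this bounds $\int\eta^2\psi_\varepsilon^{2A}|\nabla\psi|^2$ uniformly in $\varepsilon$ and makes the term vanish like $\varepsilon^{A+1}$. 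The same monotone limit, with $\psi_\varepsilon^{2A}\nearrow+\infty$ on $\{\psi=0\}$ when $A<0$, is also what forces $\mu(\{\psi=0\})=0$---this is the precise content of your ``contributes trivially''.
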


Thus, as expected (and confirmed by computations), there is again an upper
bound for the integrability exponent and this is related to the (Kato)
constant $A$. For instance, in the minimal surface setting, this upper bound
prevented B\'{e}rard to reach dimensions $m\geq6$. Very recently, L.F. Tam and
D. Zhou employed this method to give characterizations of higher-dimensional
catenoids, \cite{TZ-TAMS}. It is also important to remark that
with the direct method we are able to reach low exponents $p_0<p \leq A+1$, i.e., even smaller than those
considered in Theorem \ref{th_vanishing1}. Apparently, one of the main drawbacks of the direct approach, when compared with Theorem \ref{th_vanishing1}, concerns with the case $H=A+1$ where one
obtains equality only on the set $\Omega$ where $\psi^A$ is positive.
However in geometric  applications this is not a serious problem, because, typically, $A$ arises from a refined Kato inequality and therefore it is negative.

\begin{proof}
[Proof of Theorem \ref{th_direct}.] The proof aims at obtaining a
Caccioppoli-type inequality for $\psi$. Define the distribution
\begin{align}\label{Einequality}
0\leq \mathcal{E}:=\psi\Delta\psi+a\psi^2+A|\nabla\psi|^2.
\end{align}
Note that since by assumption $\mathcal{E}$ is nonnegative, $\mathcal{E}$ is in fact a positive measure. By the weak formulation of inequality \eqref{Einequality},
\begin{align}\label{d_inequality}
0\leq \int \eta d\mathcal{E} = \int_M a\eta\psi^2 + \int_MA\eta|\nabla\psi|^2-\int_M \left\langle \nabla(\eta\psi),\nabla\psi \right\rangle,
\end{align}
for each test function $0\leq \eta\in Lip_c\left( M \right)$. From now on, until otherwise specified, we assume that $0<p<1$. The case $p\geq1$ is easier and will be dealt with at the end of proof.
We apply \eqref{d_inequality} to the function
$\eta = \rho^{2}\psi_\delta^{2p-2}$ where $0 \leq \rho\in C_{c}^{\infty}\left( M \right)$ is  a cut-off function to be chosen later, and, for every $\delta>0$, $\psi_\delta=\psi+\delta>0$, to get
\begin{align}\label{di}
0&\leq \int_M \rho^{2}\psi_\delta^{2p-2}d\mathcal{E}\\
\nonumber  &\leq \int_M a\rho^2\psi_\delta^{2p-2}\psi^2 + \int_M A\rho^2\psi_\delta^{2p-2}|\nabla\psi|^2 - (2p-2)\int_M\rho^2\psi_\delta^{2p-3}\psi|\nabla\psi|^2\\
&\nonumber-\int_M\rho^2\psi_\delta^{2p-2}|\nabla\psi|^2-2\int_M\rho\psi_\delta^{2p-2}\psi\left\langle \nabla\rho,\nabla\psi\right\rangle.
\end{align}
On the other hand, the spectral assumption on $\mathcal{L}_H$ implies that, for each test function $\phi\in Lip_c(M)$,
\[
\int_M |\nabla\phi|^2\geq\int_MHa\phi^2.
\]
Choosing $\phi=\rho\psi_\delta^p$, we get
\begin{align*}
\int_M a\rho^2\psi_\delta^{2p-2}\psi^2&\leq\int_M a\rho^2\psi_\delta^{2p}\\
&\nonumber\leq \frac{p^2}H\int_M\rho^2\psi_\delta^{2p-2}|\nabla\psi|^2+\frac1H\int_M\psi_\delta^{2p}|\nabla\rho|^2\\
&\nonumber+\frac{2p}H\int_M\rho\psi_\delta^{2p-1}\left\langle \nabla\psi,\nabla\rho\right\rangle.
\end{align*}
Inserting this latter in \eqref{di}  gives
\begin{align*}%\label{0cacc}
0&\leq \int_M \rho^{2}\psi_\delta^{2p-2} d\mathcal{E}\\
&\nonumber\leq \left(\frac{p^2}H+A-1\right)\int_M\rho^2\psi_\delta^{2p-2}|\nabla\psi|^2 - (2p-2)\int_M\rho^2\psi_\delta^{2p-3}\psi|\nabla\psi|^2 \\
&\nonumber+ \frac 1H \int_M\psi_\delta^{2p}|\nabla\rho|^2+2\int_M\rho\left(\frac pH\psi_\delta-\psi\right)\psi_\delta^{2p-2}\left\langle \nabla\rho,\nabla\psi\right\rangle.\nonumber
\end{align*}
Recalling that $0<p<1$, since $\psi<\psi+\delta=\psi_\delta$, from the above we get
\begin{align}\label{0cacc}
&\int_M \rho^{2}\psi_\delta^{2p-2} d\mathcal{E}+\left(2p - \frac {p^2}H - (A+1)\right)\int_M\rho^2\psi_\delta^{2p-2}|\nabla\psi|^2 \\&%\nonumber
\leq \frac 1H \int_M\psi_\delta^{2p}|\nabla\rho|^2+2\left(\frac pH-1\right)\int_M\rho\psi_\delta^{2p-1}\left\langle \nabla\rho,\nabla\psi\right\rangle\nonumber\\
&+2\delta\int_M\rho\psi_\delta^{2p-2}\left\langle \nabla\rho,\nabla\psi\right\rangle
.\nonumber
\end{align}
Now, assume $H>A+1$. Since $\delta\leq\psi_\delta$, $\mathcal E\geq 0$, recalling the Young inequality $2ab\leq\varepsilon a^{2}+\varepsilon
^{-1}b^{2}$, valid for each $a,b\geq0$ and $\varepsilon>0$, and applying Schwarz inequality, we deduce that%
\begin{align*}
&2\left(\frac pH-1\right)\int_M\rho\psi_\delta^{2p-1}\left\langle \nabla\rho,\nabla\psi\right\rangle+2\delta\int_M\rho\psi_\delta^{2p-2}\left\langle \nabla\rho,\nabla\psi\right\rangle \\&\leq \varepsilon \int_{M}\left\vert \nabla\psi\right\vert ^{2}\rho
^{2}\psi_\delta^{2p-2}
+ \frac 1\varepsilon\left(\left|\frac pH -1\right|+1\right)^2 \int_{M}\psi_\delta^{2p}\left\vert
\nabla\rho\right\vert ^{2}.
\end{align*}
Using this inequality into (\ref{0cacc}) we finally obtain
\begin{align}\label{cacc}
F\int_{M}\left\vert \nabla\psi\right\vert ^{2}\rho
^{2}\psi_\delta^{2p-2}\leq G\int_{M}\psi_\delta^{2p}\left\vert
\nabla\rho\right\vert ^{2},
\end{align}
where
\begin{align*}
&F:=2p - \frac {p^2}H - (A+1) -\varepsilon,\\
&G:= \frac 1H + \frac 1\varepsilon\left(\left|\frac pH -1\right|+1\right)^2.
\end{align*}
Note that, by assumption,
\[
p^{2}-2Hp+H\left(  A+1\right)  <0,
\]
therefore  we can take $\varepsilon$ small enough so that $F>0$.
Moreover, using monotone and dominated convergence we see that \eqref{cacc} holds even in the limit $\delta\to0$. Thus, if we choose $\rho=\rho_R\in C_{c}^{\infty}\left(  B_{R}\left(  o\right)
\right)  $ in such a way that $\left\vert \nabla\rho_R\right\vert \leq
R^{-1}$ and $\rho_R\equiv 1$ on $B_{R/2}\left(  o\right)  $, we get
\begin{align*}%\label{cacci}
F\int_{B_{R/2}}\left\vert \nabla\psi\right\vert ^{2}\psi^{2p-2}\leq \frac G{R^2}\int_{B_R}\psi^{2p},
\end{align*}
and letting
$R\rightarrow+\infty$ we conclude that $\psi$ is constant on the set
\[
\Omega_{p-1}=\left\{  x\in M:\psi^{2(p-1)}\left(  x\right)  >0\right\}.\]
Since $0<p<1$, then $\Omega_{p-1}=M$ and $\psi$ is constant on $M$. This completes the proof in the case $H>A+1$.\\
Now suppose that $H=A+1$. Since, in this case, $p_0=p_1 = A+1 =H$,  \eqref{0cacc} reads
\begin{align}\label{caccE}
0\leq \int_M \rho^{2}\psi_\delta^{2A} d\mathcal{E}
&\leq  \frac 1H \int_M\psi_\delta^{2A+2}|\nabla\rho|^2+2\delta\int_M\rho\psi_\delta^{2A}\left\langle \nabla\rho,\nabla\psi\right\rangle\\
&\leq  \frac 1H \int_M\psi_\delta^{2A+2}|\nabla\rho|^2+2\delta\int_M\rho\psi_\delta^{2A}|\nabla\rho||\nabla\psi|.\nonumber
\end{align}
We claim that
\begin{equation}
2\delta\int_{M}\rho\psi_{\delta}^{2A}\left\vert \nabla\rho\right\vert
\left\vert \nabla\psi\right\vert \rightarrow0\text{, as }\delta\rightarrow
0.\label{state}%
\end{equation}
Indeed, since $0<\delta\leq\psi_\delta$,  recalling also that we are in the case $-1<A=p-1<0$, we can compute
\begin{align}\label{reg}
2\int_{M}\delta\rho\psi_{\delta}^{2A}\left\vert \nabla\rho\right\vert
\left\vert \nabla\psi\right\vert  & =2\int_{M}\left\vert \nabla\rho\right\vert
\left\{  \delta\psi_{\delta}^{A}\right\}  \left\{  \rho\psi_{\delta}%
^{A}\left\vert \nabla\psi\right\vert \right\}  \\
&\leq2\delta^{A+1}\int_{M}  \left\vert \nabla\rho\right\vert
\left\{  \rho\psi_{\delta}%
^{A}\left\vert \nabla\psi\right\vert \right\}  \nonumber\\
& \leq \delta^{A+1}\left\{\int_{M}\left\vert
\nabla\rho\right\vert ^{2}+\int_{M}\rho^{2}\psi_\delta^{2A}\left\vert \nabla\psi
\right\vert ^{2}\right\}.\nonumber
\end{align}
Now, we recall from the regularity theory for Bochner
type inequalities in the presence of a refined Kato inequality (\cite{PRS-JFA} or Lemma 4.12 and
Lemma 4.13 in \cite{PRS-Book}) that%
\[
\psi^{A+1}\in W_{loc}^{1,2}\left(  M\right)
\]
with%
\[
\nabla\psi^{A+1}=\left(  A+1\right)  \psi^{A}\nabla\psi.
\]
Since
\[
\int_{M}\rho^{2}\psi_\delta^{2A}\left\vert \nabla\psi
\right\vert ^{2}\leq \int_{M}\rho^{2}\psi^{2A}\left\vert \nabla\psi
\right\vert ^{2} = \int_M\frac{\rho^2}{(A+1)^2}\left\vert \nabla\psi^{A+1}
\right\vert ^{2}<+\infty
\]
we deduce that the quantity
\[
\left\{\int_{M}\left\vert
\nabla\rho\right\vert ^{2}+\int_{M}\rho^{2}\psi_\delta^{2A}\left\vert \nabla\psi
\right\vert ^{2}\right\}
\]
on the right hand side of (\ref{reg}) is bounded uniformly in $\delta$. The validity of \eqref{state} now follows by letting $\delta\to0$ in \eqref{reg}, thus proving the claim. With this preparation, we can now take the limits as $\delta\to 0$ in (\ref{caccE}) and conclude
\begin{align}\label{caccE-}
0\leq \int_M \rho^{2}\psi^{2A} d\mathcal{E}\leq  \frac 1H \int_M\psi^{2A+2}|\nabla\rho|^2.
\end{align}
Then, choosing as before $\rho=\rho_R$ and letting $R\to\infty$ we obtain that $\mathcal{E}\equiv 0$ on the set $\Omega_A$ where $\psi^A>0$, thus completing the proof of the Theorem for $0<p<1$.\\

If $p\geq 1$, all the above arguments can be greatly simplified because one can obtain the validity of inequality (\ref{di}) with $\delta=0$. This is obviously true if $2p-2\geq1$, i.e., $p\geq3/2$. On the other hand, if $1\leq p <3/2$, then
\[
\rho^2\psi_\delta^{2p-3}\psi|\nabla\psi|^2 \leq \rho^2\psi^{2p-2}|\nabla\psi|^2 \in L^1.
\]
Accordingly, we can take the limits as $\delta\to0$ and use dominated convergence to deduce that inequality (\ref{di}) persists with $\delta=0$ and $\psi_\delta=\psi$, as claimed.
Now, in case $H>A+1$ we get that $\psi$ is constant on $\Omega_{p-1}$. Using a
connectedness argument, it follows easily that either $\Omega_{p-1}=\emptyset$ or
$\Omega_{p-1}=M$. In any case, $\psi$ is constant.\\
Finally, in case $H=A+1(\geq 1)$, we immediately get the validity of \eqref{caccE-} and the desired conclusion follows.
\end{proof}

What makes Theorem \ref{th_vanishing1} and  Theorem \ref{th_direct} interesting is that,
due to their abstract formulations, they can be adapted to a number of different geometric
situations where a relevant function, of geometric content, obeys an inequality of type
(\ref{inequality}). For instance, this is the case if $\psi=\left\vert
df\right\vert $ where $f:M\rightarrow N$ is a harmonic map or $\psi=\left\vert
Ric\right\vert $ where $Ric$ is the Ricci tensor of a locally conformally flat
manifold $M$ with zero scalar curvature. However, in many geometric
applications, this function is required to satisfy (scale-invariant)
integral conditions that cannot be captured in the above theorem, unless more
restrictive spectral assumptions are imposed. Thus, we are led to ask the following

\begin{problem*}
\label{problem}Is there an abstract formulation of Theorem \ref{th_CSZ}, i.e.,
a vanishing result for solutions of (\ref{inequality}) with higher
integrability exponents?
\end{problem*}

\section{Statement of the new abstract result}

Note that, in the special situation of minimal submanifolds, it happens that
the solution to the PDE, the potential term of the Schr\"{o}dinger (stability)
operator, the bound of the curvature tensor (Gauss equations) etc... coincide.
This is the reason why the result looks so elegant: one  $L^{p}$ assumption
controls  so many different objects! Furthermore, it is well known that
Euclidean minimal submanifolds enjoy an isoperimetric (hence Sobolev)
inequality and much more. In particular, volumes are at least Euclidean.
Obviously all this is, in general, no longer true in the abstract setting and some
choices have to be made. In this note we propose the following answer to
Problem 1.

\begin{theorem}
\label{th_abstract}Let $\left(  M,\left\langle ,\right\rangle \right)  $ be a
complete, $m(\geq 3)$-dimensional Riemannian manifold supporting the Euclidean Sobolev inequality%
\[
\left\Vert v\right\Vert _{L^{\frac{2m}{m-2}}}\leq S\left\Vert \nabla
v\right\Vert _{L^{2}}\text{,}%
\]
for every $v\in C_{c}^{\infty}\left(  M\right)  $ and for some constant $S>0$.
Assume also that%
\[
\mathrm{vol}\left(  B_{R}\right)  =O\left(  R^{m}\right)  .
\]
Let $0\leq\psi\in Lip_{loc}$ be a distributional solution of the differential
inequality (\ref{inequality}) where $0\leq a\left(  x\right)  \in L^{m/2}$
satisfies the uniform decay estimate%
\[
\sup_{M\backslash B_{R}}a\left(  x\right)  =O\left(  \frac{1}{R^{2}}\right)
.
\]
Suppose that%
\[
\psi\in L^{2p}\left(  M\right)  ,
\]
for some $p>(1+A)/2$. Then:

\begin{enumerate}
\item[(i)] If the spectral condition (\ref{spectral}) holds for some%
\[
H>1+A>0
\]
then either $a  \equiv0$ or $\psi\equiv0$.

\item[(ii)] If the spectral condition (\ref{spectral}) holds with%
\[
H=1+A>0,
\]
then $\psi$ satisfies (\ref{inequality}) with the equality sign.
\end{enumerate}
\end{theorem}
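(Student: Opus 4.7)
My plan is to adapt the test-function strategy from the proof of Theorem \ref{th_direct}, with the crucial difference that the potential term $\int a\rho^2\psi^{2p}$ is now absorbed via the Euclidean Sobolev inequality and the $L^{m/2}$-integrability of $a$, rather than through the spectral assumption (which in Theorem \ref{th_direct} restricts $p$ to the interval $(p_0,p_1)$). This is the abstract counterpart of the Schoen-Simon-Yau strategy extended by Anderson and by Shen-Zhu to give Theorem \ref{th_CSZ}. As a first step I repeat the computation at the beginning of the proof of Theorem \ref{th_direct}: test the weak form of (\ref{inequality}) against $\eta=\rho^2\psi_\delta^{2p-2}$ with $\psi_\delta=\psi+\delta$ and send $\delta\downarrow 0$, using the refined-Kato regularity $\psi^{A+1}\in W^{1,2}_{\mathrm{loc}}(M)$. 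One arrives at the baseline identity
\[
\int\rho^2\psi^{2p-2}\,d\mathcal{E}+(2p-1-A)\int\rho^2\psi^{2p-2}|\nabla\psi|^2\leq\int a\rho^2\psi^{2p}-2\int\rho\psi^{2p-1}\langle\nabla\rho,\nabla\psi\rangle,
\]
whose leading coefficient $2p-1-A$ is strictly positive thanks to $p>(1+A)/2$.

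Next I bound the potential term by H\"older followed by the Sobolev inequality applied to $\phi=\rho\psi^p$, obtaining $\int a\rho^2\psi^{2p}\leq S^2\|a\|_{L^{m/2}(\mathrm{supp}\rho)}\int|\nabla(\rho\psi^p)|^2$. Since $a\in L^{m/2}$, $\|a\|_{L^{m/2}(M\setminus B_{R_0})}\to 0$ as $R_0\to\infty$; fix $R_0$ so large that $S^2 p^2\|a\|_{L^{m/2}(M\setminus B_{R_0})}<(2p-1-A)/2$ and restrict $\rho$ to have support in $M\setminus B_{R_0}$. Expanding $|\nabla(\rho\psi^p)|^2$, applying Young to the cross term, and absorbing into the left-hand side of the baseline identity yields the Caccioppoli inequality
\[
\int\rho^2\psi^{2p-2}|\nabla\psi|^2+\int\rho^2\psi^{2p-2}\,d\mathcal{E}\leq C\int\psi^{2p}|\nabla\rho|^2.
\]
Plugging in $\rho=\rho_{R_0,R}$ vanishing on $B_{R_0}$ and outside $B_{2R}$, equal to $1$ on $B_R\setminus B_{2R_0}$, and satisfying $|\nabla\rho_{R_0,R}|\leq 2/R_0$, $|\nabla\rho_{R_0,R}|\leq 2/R$ on the inner and outer transition annuli, and sending $R\to\infty$ and then $R_0\to\infty$ (both sides vanish by $\psi\in L^{2p}$), one deduces $\psi^{2p-2}|\nabla\psi|^2\in L^1(M)$, so that $\psi^p\in W^{1,2}(M)$.

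Once $\psi^p\in W^{1,2}(M)$, approximate by $C_c^\infty$-functions (dense in $W^{1,2}$ on a complete manifold without boundary) and test the spectral condition, giving $H\int a\psi^{2p}\leq p^2\int\psi^{2p-2}|\nabla\psi|^2$; simultaneously, passing $\rho\to 1$ in the baseline identity (now justified by the global integrability just established) gives $(2p-1-A)\int\psi^{2p-2}|\nabla\psi|^2\leq\int a\psi^{2p}$. Combining,
\[
\bigl[H(2p-1-A)-p^2\bigr]\int\psi^{2p-2}|\nabla\psi|^2\leq 0.
\]
In case (i) with $H>1+A$, whenever $p$ lies in the effective range $(p_0,p_1)$ the bracket is strictly positive, forcing $|\nabla\psi|\equiv 0$ on $\{\psi>0\}$; the strong maximum principle applied to $\psi^{A+1}$ (a non-negative subsolution of $\mathcal{L}_{A+1}$) propagates the vanishing to all of $M$, and the infinite volume of $M$ (forced by the Euclidean Sobolev inequality) together with $\psi\in L^{2p}$ rules out non-zero constants, so $\psi\equiv 0$. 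In case (ii), $H=1+A$, the bracket equals $-(p-H)^2/H$ and gives no information for $p\neq H$; one must then use the Caccioppoli of the previous paragraph directly and pass to the limit as in the proof of Theorem \ref{th_direct} to deduce $\mathcal{E}\equiv 0$ on $\{\psi^A>0\}$.

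The main obstacle I foresee is the case in which $p$ lies outside the effective range $(p_0,p_1)$ (in particular the geometrically relevant regime $p\geq p_1$): the bracket in the displayed inequality is non-positive and no direct conclusion is obtained. I would overcome this by iterating the Sobolev-enhanced Caccioppoli in Moser-type fashion, the Sobolev inequality upgrading $\psi^p\in W^{1,2}(M)$ to $\psi^p\in L^{2m/(m-2)}$ on $M\setminus B_{R_0}$, and hence $\psi\in L^{2pm/(m-2)}$ outside a large ball; repeating drives $\psi$ into $L^\infty$ on $M\setminus B_{R_0^*}$. The uniform decay $\sup_{M\setminus B_R}a=O(R^{-2})$ together with the polynomial volume growth $\mathrm{vol}(B_R)=O(R^m)$ then allow one to reduce the problem to an effective exponent inside $(p_0,p_1)$, closing the argument. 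It is precisely at this bootstrap that the Euclidean Sobolev inequality and the volume growth hypothesis are used in their full strength.
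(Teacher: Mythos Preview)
Your approach is genuinely different from the paper's, and the difference is instructive. The paper does \emph{not} attempt to work directly at the given integrability exponent $p$. Instead it exploits the refined Kato constant $A$ in a structural way: setting $u=\psi^{1+A}$ turns (\ref{inequality}) into the \emph{linear} inequality $\Delta u+b(x)u\geq 0$ with $b=(1+A)a$, and then the Bando--Kasue--Nakajima decay estimate (Theorem~\ref{th_BKN}) is invoked as a black box to obtain the sharp pointwise decay $\sup_{M\setminus B_R}u=O(R^{-\alpha})$ for every $\alpha<m-2$, i.e.\ $\sup_{M\setminus B_R}\psi=O(R^{-(m-2)/(1+A)+\varepsilon})$. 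With this decay in hand, a co-area computation using $\mathrm{vol}(B_R)=O(R^m)$ shows $\int_{B_R}\psi^{2H}=o(R^2)$, and the proof closes by applying Theorem~\ref{th_vanishing1} (or Theorem~\ref{th_direct}) at the \emph{specific} exponent $H$, which always lies in the effective interval. The original exponent $p$ is used only to start the BKN machinery.

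Your direct test-function/Sobolev-absorption argument is fine when $p\in(p_0,p_1)$, and it has the pleasant feature of being self-contained. The gap is in the bootstrap paragraph for $p\notin(p_0,p_1)$. Moser iteration as you describe it will yield $\psi\in L^\infty$ outside a large ball, but mere boundedness is \emph{not} sufficient to ``reduce to an effective exponent'': if $p>p_1$ you need $\psi\in L^{2q}$ (or $\int_{B_R}\psi^{2q}=o(R^2)$) for some $q<p$, and $L^\infty\cap L^{2p}$ gives no such control on an infinite-volume manifold. What is actually required is a quantitative decay rate $\psi=O(R^{-\beta})$ with $\beta$ large enough---and this is precisely the content of the BKN theorem, whose proof hinges on the linear structure of the equation satisfied by $\psi^{1+A}$, not by $\psi^{p}$. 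So either you must reprove BKN in your framework (in which case you have essentially recovered the paper's argument), or you must explain concretely how $L^\infty$ plus $\sup a=O(R^{-2})$ plus $\mathrm{vol}(B_R)=O(R^m)$ delivers the reduction; as written this step is asserted but not carried out. A secondary point: in case~(ii) your reference back to Theorem~\ref{th_direct} only yields equality on $\{\psi^A>0\}$, whereas the paper obtains equality on all of $M$ by appealing to Theorem~\ref{th_vanishing1}(ii).
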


In geometric situations, the potential term $a\left(  x\right)  $ has a
geometric content (the norm of the second fundamental form, of a curvature
tensor etc.) and satisfies some semi-linear elliptic inequality. It
follows that the uniform decay estimate required on $a\left(  x\right)  $ is a
consequence of the assumption $a\in L^{m/2}$; see e.g. \cite{CH-Commentarii}, \cite{PV-DJA},
\cite{V-uniform} and references therein.

It is well known that, in the presence of a Sobolev inequality, the spectral
assumption is implied by the requirement that the potential term $a\left(
x\right)  $ is small in a suitable integral sense when compared with the
Sobolev constant. Using this point of view, the vanishing part of Theorem
\ref{th_abstract} can be formulated in the following elegant form.

\begin{corollary}
Let $\left(  M,\left\langle ,\right\rangle \right)  $ be a complete, $m(\geq 3)$-dimensional Riemannian
manifold supporting the Euclidean Sobolev inequality%
\[
\left\Vert v\right\Vert _{L^{\frac{2m}{m-2}}}\leq S\left\Vert \nabla
v\right\Vert _{L^{2}}\text{,}%
\]
for every $v\in C_{c}^{\infty}\left(  M\right)  $ and for some constant $S>0$.
Assume also that%
\[
\mathrm{vol}\left(  B_{R}\right)  \leq O\left(  R^{m}\right)  .
\]
Let $0\leq\psi\in Lip_{loc}$ be a distributional solution of the differential
inequality (\ref{inequality}) where $0\leq a\left(  x\right)  \in L^{m/2}$
satisfies%
\[
\sup_{M\backslash B_{R}}a\left(  x\right)  =O\left(  \frac{1}{R^{2}}\right)
\]
and%
\[
\left\Vert a\right\Vert _{L^{\frac{m}{2}}}\leq\left(  HS^{2}\right)  ^{-1},
\]
for some%
\[
H>1+A>0.
\]
If $\psi\in L^{2p}$ for some $p>(1+A)/2$, then either $a\equiv0$ or $\psi\equiv0$.
\end{corollary}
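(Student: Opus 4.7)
The plan is to reduce the corollary to Theorem \ref{th_abstract}(i) by verifying that the integral smallness condition $\|a\|_{L^{m/2}} \leq (HS^{2})^{-1}$, together with the Sobolev inequality, forces the spectral assumption $\lambda_{1}(\mathcal{L}_{H}) \geq 0$. Once this is done, all the remaining hypotheses of Theorem \ref{th_abstract}(i) are immediately available from the hypotheses of the corollary (completeness, Euclidean Sobolev, volume growth, $a \in L^{m/2}$ with uniform $R^{-2}$ decay, the differential inequality for $\psi$, the integrability $\psi \in L^{2p}$ with $p>(1+A)/2$, and $H>1+A>0$), and the conclusion $a\equiv 0$ or $\psi \equiv 0$ follows at once.

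The first (and essentially only nontrivial) step is the spectral inequality. For an arbitrary test function $\phi \in Lip_{c}(M)$, I would apply H\"older's inequality to the pair of conjugate exponents $m/2$ and $m/(m-2)$ to get
\[
\int_{M} a\,\phi^{2} \leq \|a\|_{L^{m/2}} \,\|\phi\|_{L^{2m/(m-2)}}^{2},
\]
and then invoke the Euclidean Sobolev inequality to bound $\|\phi\|_{L^{2m/(m-2)}}^{2} \leq S^{2}\|\nabla \phi\|_{L^{2}}^{2}$. Combining these two estimates yields
\[
H\int_{M} a\,\phi^{2} \leq HS^{2}\|a\|_{L^{m/2}}\int_{M}|\nabla \phi|^{2} \leq \int_{M}|\nabla \phi|^{2},
\]
where in the last step I use the smallness hypothesis $\|a\|_{L^{m/2}} \leq (HS^{2})^{-1}$. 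This is precisely the variational characterization of $\lambda_{1}(\mathcal{L}_{H}) \geq 0$.

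With the spectral bound in hand, I would simply invoke Theorem \ref{th_abstract}(i), whose hypotheses are now all verified, to conclude that either $a \equiv 0$ or $\psi \equiv 0$. There is no genuine obstacle here: the argument is a standard Sobolev--H\"older packaging of the potential term, and the whole content of the corollary lies in repackaging the spectral gap as a scale-invariant smallness condition on $a$ that is often easier to check in geometric applications.
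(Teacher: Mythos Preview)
Your proposal is correct and matches the paper's intended argument: the paper explicitly remarks, just before the corollary, that the Sobolev inequality together with the smallness of $\|a\|_{L^{m/2}}$ yields the spectral assumption, after which the corollary is a direct consequence of Theorem \ref{th_abstract}(i). Your H\"older--Sobolev computation is exactly the standard derivation the paper alludes to.
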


We note that, actually, the upper bound on $\left\Vert a\right\Vert
_{L^{\frac{m}{2}}}$ can be improved bypassing Theorem \ref{th_vanishing1} and
using directly the Sobolev inequality as explained in \cite{PRS-TAMS},
\cite{PRS-Book}. We leave the corresponding statement to the interested reader.

\section{Back to minimal hypersurfaces}

Before proceeding in our analysis, let us observe how Theorem \ref{th_CSZ}
follows from the vanishing part of Theorem \ref{th_abstract}. Actually, it
improves the original version. This generalization of Shen-Zhou result has
been recently observed also by H.\ Fu and Z. Li, \cite{FZ-IJM}. However, their
approach is less direct than ours and relies on a structure theorem by
Anderson, \cite{And-preprint}. Accordingly, they show that, in the above
assumptions, the minimal submanifold has only one end; see also Section
\ref{section analysis}. Recall that the immersed, minimal submanifold is said to
be $H$-stable if the \textquotedblleft modified stability
operator\textquotedblright\ $\mathcal{L}_{H}=-\Delta-H\left\vert
\mathrm{II}\right\vert ^{2}$ satisfies $\lambda_{1}\left(  \mathcal{L}%
_{H}\right)  \geq0$.

\begin{corollary}
Let $f:M^{m}\rightarrow\mathbb{R}^{N}$ be a complete, $m$-dimensional, minimal
submanifold of finite total curvature $\left\vert \mathrm{II}\right\vert
^{2}\in L^{m/2}\left(  M\right)  $, $m\geq3$. If $M$ is $H$-stable for some
$H>\left(  m-2\right)  /m$ then $f\left(  M\right)  $ is an affine $m$-plane.
\end{corollary}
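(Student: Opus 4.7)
The plan is to reduce the corollary to an application of the vanishing part, Theorem \ref{th_abstract}(i), with the natural geometric choices $\psi = |\mathrm{II}|$, $a(x) = |\mathrm{II}|^2$ and $A = -2/m$. Indeed, the refined Kato inequality extracted from the Simons equation, namely \eqref{simons1}, is precisely the differential inequality (\ref{inequality}) for this triple. With this identification one has $1+A = (m-2)/m$, so the assumed $H$-stability with $H > (m-2)/m$ is exactly the spectral condition $\lambda_1(\mathcal{L}_H) \geq 0$ with $H > 1+A > 0$. The finite total curvature hypothesis $|\mathrm{II}|^2 \in L^{m/2}$ reads simultaneously as $a \in L^{m/2}(M)$ and as $\psi \in L^{m}(M)$, i.e.\ $\psi \in L^{2p}$ for $p = m/2$, which certainly satisfies $p > (1+A)/2 = (m-2)/(2m)$.

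The remaining task is to verify the two ambient hypotheses of Theorem \ref{th_abstract}, namely the Euclidean Sobolev inequality and the Euclidean volume upper bound, as well as the pointwise decay $\sup_{M\setminus B_R} a = O(R^{-2})$. For the Sobolev inequality one invokes the Michael--Simon inequality, which holds on any minimal submanifold of Euclidean space with a universal constant depending only on $m$. For the volume growth $\mathrm{vol}(B_R) = O(R^m)$ one uses the standard consequence of finite total curvature for minimal submanifolds, first observed by Anderson (and reproved by several authors since): $|\mathrm{II}|^2 \in L^{m/2}$ combined with the minimality and Michael--Simon forces at most Euclidean volume growth (in fact $\mathrm{vol}(B_R)$ is asymptotically a multiple of $\omega_m R^m$). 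Finally, the uniform decay $\sup_{M\setminus B_R}|\mathrm{II}|^2 = O(R^{-2})$ from the integrability $|\mathrm{II}|^2 \in L^{m/2}$ is precisely the kind of $\varepsilon$-regularity/decay statement established in \cite{CH-Commentarii}, \cite{PV-DJA}, \cite{V-uniform}, which rests on Moser iteration applied to the semilinear elliptic inequality that $|\mathrm{II}|$ satisfies by Simons.

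Once all hypotheses are in place, Theorem \ref{th_abstract}(i) applies and yields the dichotomy $a\equiv 0$ or $\psi\equiv 0$. Both alternatives coincide here, since $a = \psi^2$, and both say $|\mathrm{II}| \equiv 0$ on $M$. The vanishing of the second fundamental form of a complete minimal immersion into $\mathbb{R}^N$ means that $f(M)$ is a totally geodesic affine $m$-plane, which is the desired conclusion.

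The only non-bookkeeping step is the verification of the three ambient conditions just discussed, and among these the most delicate is the uniform decay of $|\mathrm{II}|^2$: it is not a formal consequence of $L^{m/2}$-integrability alone but requires exploiting the Simons-type elliptic inequality together with the Michael--Simon Sobolev inequality to run a Moser-type argument on balls of unit radius centered far from $o$. Everything else is a direct citation or a quick check, so the proof reduces, as in the other geometric applications sketched in the paper, to matching the notation of Theorem \ref{th_abstract} with the geometric data.
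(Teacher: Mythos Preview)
Your proof is correct and follows essentially the same route as the paper's: identify $\psi=|\mathrm{II}|$, $a=|\mathrm{II}|^{2}$, $A=-2/m$ (the paper prints $A=-(m+2)/m$, a typo, since it immediately computes $1+A=(m-2)/m$), invoke the Anderson/\cite{PV-DJA} decay $|\mathrm{II}|^{2}=o(R^{-2})$, and apply Theorem~\ref{th_abstract}(i). You are simply more explicit than the paper in verifying the Michael--Simon Sobolev inequality and the Euclidean volume upper bound, which the paper takes as standard background for minimal submanifolds and leaves implicit.
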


\begin{proof}
By the Simons equation, $\psi=\left\vert \mathrm{II}\right\vert $ satisfies
(\ref{inequality}) with%
\[
a\left(  x\right)  =\left\vert \mathrm{II}\right\vert ^{2}\in L^{m/2}\text{
and }A=-\left(  m+2\right)  /m.
\]
Anderson curvature estimates (see also \cite{PV-DJA}) state that%
\[
\left\vert \mathrm{II}\right\vert ^{2}=o\left(  \frac{1}{R^{2}}\right)  \text{
as }R\rightarrow+\infty.
\]
Since $0<1+A=\left(  m-2\right)  /m$, $\psi\in L^{m>1}$, and $\lambda
_{1}\left(  \mathcal{L}_{H}\right)  \geq0,$ for some $H>\left(  m-2\right)  /m$ then, by Theorem \ref{th_abstract} (i), either $a=\left\vert \mathrm{II}\right\vert ^{2}\equiv0$ or $\psi=\left\vert
\mathrm{II}\right\vert \equiv0$. In any case, the submanifold is planar.
\end{proof}

\begin{remark}
The geometric setting of minimal submanifolds allows us to
test the sharpness of our abstract theorem. In the recent paper \cite{TZ-TAMS}
it is shown that the $m$-dimensional catenoid is a (non-planar) minimal
hypersurface of $\mathbb{R}^{m+1}$ satisfying $\lambda_{1}\left(
\mathcal{L}_{H}\right)  \geq0$ with $H=\left(  m-2\right)  /m$. A direct
computation also shows that its total curvature is finite. Noting
that if  $a\left(  x\right)
\geq0$ and $H_{1}>H_{2}$ then $\lambda_{1}\left(  \mathcal{L}_{H_{2}}\right)
\geq\lambda_{1}\left(  \mathcal{L}_{H_{1}}\right)  $, it follows from the discussion
in the previous proof, that the condition
$H>\left(  1+A\right)  $ in Theorem \ref{th_abstract} cannot be relaxed.
\end{remark}

As a straighforward application of the non-vanishing part of Theorem
\ref{th_abstract} we get the following characterization of the higher
dimensional catenoids.
\begin{corollary}
Let $f:M^{m}\rightarrow\mathbb{R}^{m+1}$ be a complete, non-planar, minimal
hypersurface of finite total curvature $\left\vert \mathrm{II}\right\vert
^{2}\in L^{m/2}\left(  M\right)  $, $m\geq3$. If $M$ is $\frac{m-2}{m}$-stable
then it is a catenoid.
\end{corollary}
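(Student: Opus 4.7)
The plan is to apply the non-vanishing part (ii) of Theorem~\ref{th_abstract} to $\psi = \left\vert \mathrm{II}\right\vert$. The Simons equation together with the refined Kato inequality (\ref{simons1}) shows that $\psi$ is a (distributional) solution of (\ref{inequality}) with
\[
a(x) = \left\vert \mathrm{II}\right\vert^{2}, \qquad A = -\tfrac{2}{m},
\]
so $1 + A = (m-2)/m$. The $\tfrac{m-2}{m}$-stability hypothesis is thus precisely the borderline case $H = 1 + A > 0$ of Theorem~\ref{th_abstract}, which triggers the non-vanishing conclusion (ii) instead of the vanishing conclusion (i). Non-planarity ensures $\psi \not\equiv 0$, so the conclusion of (ii) is non-trivial.

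Next, I would verify the remaining ambient hypotheses of Theorem~\ref{th_abstract} in this geometric setting. The Euclidean Sobolev inequality on $M$ is the Michael--Simon inequality, which is valid on any minimal submanifold of Euclidean space. Euclidean volume growth $\mathrm{vol}(B_R) = O(R^m)$ follows from the monotonicity formula (or from the structure theory of finite-total-curvature minimal submanifolds). The assumption $a = \left\vert \mathrm{II}\right\vert^{2} \in L^{m/2}(M)$ is exactly finite total curvature, and the uniform decay
\[
\sup_{M \setminus B_R} \left\vert \mathrm{II}\right\vert^{2} = o(R^{-2})
\]
is Anderson's curvature estimate (see also \cite{PV-DJA}), which is in fact stronger than the $O(R^{-2})$ required. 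Finally, $\left\vert \mathrm{II}\right\vert^{2}\in L^{m/2}$ gives $\psi \in L^{m}$, so one may take $p = m/2$, and $p > (1+A)/2 = (m-2)/(2m)$ is automatic.

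Applying Theorem~\ref{th_abstract}(ii) then yields equality in (\ref{inequality}), hence equality in (\ref{simons1}). Reading this backwards through the derivation of (\ref{simons1}) from the Simons identity (\ref{simons}), the equality is equivalent to pointwise equality in the refined Kato inequality
\[
\left\vert D\mathrm{II}\right\vert^{2} = \Bigl(1 + \tfrac{2}{m}\Bigr)\, \left\vert \nabla \left\vert \mathrm{II}\right\vert \right\vert^{2}
\]
on the open set $\{\left\vert \mathrm{II}\right\vert > 0\}$, which is non-empty by non-planarity.

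The main obstacle is the final, purely geometric step: concluding from pointwise equality in the refined Kato inequality that $f(M)$ is a catenoid. This rigidity statement is the content of the Tam--Zhou analysis in \cite{TZ-TAMS}. Kato equality for a minimal hypersurface forces $\mathrm{II}$ to have rank one at each point of $\{\left\vert \mathrm{II}\right\vert > 0\}$, so that all but one of the principal curvatures vanish there; combined with the Codazzi equations, completeness, and minimality, this algebraic structure propagates to force $f(M)$ to be a rotationally symmetric minimal hypersurface, i.e., a catenoid. Invoking this classification from \cite{TZ-TAMS} then completes the proof.
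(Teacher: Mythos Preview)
Your proof is correct and follows exactly the paper's route: apply Theorem~\ref{th_abstract}(ii) with $\psi=|\mathrm{II}|$, $a=|\mathrm{II}|^{2}$, $A=-2/m$, $H=A+1=(m-2)/m$ to force equality in (\ref{simons1}), then invoke the rigidity result of \cite{TZ-TAMS}. Your explicit verification of the ambient hypotheses (Michael--Simon Sobolev, Euclidean volume growth, Anderson's decay, the integrability $\psi\in L^{m}$) is a welcome elaboration of what the paper leaves implicit.

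One factual slip in your informal sketch of the Tam--Zhou mechanism: on a \emph{minimal} hypersurface $\mathrm{II}$ is trace-free, so it cannot have rank one at a point where $|\mathrm{II}|>0$ (a rank-one symmetric tensor $\lambda\, e\otimes e$ has nonzero trace). On the $m$-catenoid the principal curvatures are $\kappa$ (simple) and $-\kappa/(m-1)$ (multiplicity $m-1$), so $\mathrm{II}$ has full rank. The equality case in the refined Kato inequality for a Codazzi tensor yields this two-eigenvalue structure, not a rank-one structure. This does not affect the validity of your proof, since you correctly defer the classification to \cite{TZ-TAMS}, but the heuristic explanation should be corrected.
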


\begin{proof}
According to a nice result in \cite{TZ-TAMS} it suffices to show that%
\[
\left\vert \mathrm{II}\right\vert \left(  \Delta\left\vert \mathrm{II}%
\right\vert +\left\vert \mathrm{II}\right\vert ^{2}\left\vert \mathrm{II}%
\right\vert \right)  =\frac{2}{m}\left\vert \nabla\left\vert \mathrm{II}%
\right\vert \right\vert ^{2}.
\]
But this follows immediately from Theorem \ref{th_abstract} (ii) applied with
the choices $a\left(  x\right)  =\left\vert \mathrm{II}\right\vert ^{2}\in
L^{m/2}$, $A=-\left(  m+2\right)  /m$, $H=A+1=\left(  m-2\right)  /m$ and
$\psi=\left\vert \mathrm{II}\right\vert .$
\end{proof}

We note that the same conclusion can be deduced e.g. by \cite{CZ-CAG}
where the authors prove a structure theorem for $\frac{m-2}{m}$-stable minimal
hypersurfaces with bounded second fundamental form. More precisely, they show that, in these assumptions,
the hypersurface is a catenoid provided it has at least two ends. Since, by Anderson, non-planar minimal hypersurfaces with finite total curvature
have bounded second fundamental form and more than one end, the asserted characterization follows. As the above proof shows, our argument is much more straightforward.

\section{Analysis of the proofs of the geometric
theorem\label{section analysis}}

In order to prove Theorem \ref{th_CSZ}, the arguments supplied by
Schoen-Simon-Yau, do Carmo-Peng and B\'{e}rard on the one hand, and Anderson
and Shen-Zhu on the other hand are very different. More precisely, both start
from uniform decay estimates for $\left\vert \mathrm{II}\right\vert $ and use
upper volume estimates in a crucial way. The decay estimates follow from
Moser-iteration arguments once it is observed that $v=\left\vert
\mathrm{II}\right\vert ^{2}$ satisfies the Simons (semi-linear) inequality
$\Delta v+2v^{2}\geq0$. Thus $\sup_{M\backslash B_{R}}\left\vert
\mathrm{II}\right\vert ^{2}=o\left(  R^{-2}\right)  $. Granted this, the
former approach is very analytic and abstract in nature. Using directly, and
from the very beginning, the spectral assumption in combination with equation
(\ref{simons}) and suitable test functions, it produces an estimate of the
$L^{m/2}$-norm of $\left\vert \mathrm{II}\right\vert ^{2}$ over large balls.
In contrast, the second approach is very geometric and relies on smooth
convergence of Riemannian manifolds. The spectral assumption appears only at
the final step and in relation with the $L^{1}$-norm of $\left\vert
\mathrm{II}\right\vert ^{2}$. More precisely, Anderson deduces from $(i)$ of
Theorem \ref{th_CSZ} that $M$ is planar provided it has only one end and this
is the case because of condition $(ii)$. Indeed, the potential theoretic
characterization of the ends by H.-D. Cao, Y. Shen and S. Zhu, \cite{CSZ-MRL}%
, shows that all the ends of $M$ are non-parabolic so that, by harmonic
function theory, \cite{LT-JDG}, \cite{LW-MRL}, condition $(ii)$ of Theorem
\ref{th_CSZ} forces $M$ to have only one end; see also \cite{PRS-Book}. Unlike
Anderson, Shen-Zhu show that $(i)$ of Theorem \ref{th_CSZ} forces a uniform
decay of $\left\vert \mathrm{II}\right\vert ^{2}$ that is faster than
expected, i.e., $\sup_{M\backslash B_{R}}\left\vert \mathrm{II}\right\vert
^{2}=O\left(  R^{-m}\right)  $. This is obtained via a theorem by Schoen on the curvature decay of minimal graphs of
bounded slope. Now, as established by Anderson, $M$ has Euclidean volume
growth. It follows that $\left\vert \mathrm{II}\right\vert ^{2}\in
L^{1}\left(  M\right)  $ and the conclusion is reduced on the $L^{2}%
$-vanishing result by do Carmo-Peng, \cite{DCP-Beijing}.

\section{Proof of the abstract result\label{section proof}}

In a certain sense, the strategy by Shen-Zhu looks promising. According to
Theorem \ref{th_vanishing1}, on noting also that Moser-type arguments work for
general PDEs, what we need is a way to improve the uniform decay of the
solution of (\ref{inequality}). It is reasonable that this can be obtained
thanks to the presence of the gradient term, that corresponds to a refined
Kato inequality. But this kind of arguments are already known in an ambient
that only apparently is far from the minimal surface theory. Namely, the
theory of ALE ends of conformally flat, half-conformally or, more generally,
Bach-flat $4$-manifolds, see \cite{BKN-Invent}, and e.g. \cite{CH-Commentarii}%
, \cite{TV1-Invent}, \cite{TV-CommMathHelv}. In fact, we have the following result by S. Bando, A.
Kasue and H. Nakajima; see Section 4 in \cite{BKN-Invent}.

\begin{theorem}
\label{th_BKN}Let $\left(  M,\left\langle ,\right\rangle \right)  $ be a
complete, $m(\geq 3)$-dimensional Riemannian manifold supporting the Euclidean Sobolev inequality%
\[
\left\Vert v\right\Vert _{L^{\frac{2m}{m-2}}}\leq S\left\Vert \nabla
v\right\Vert _{L^{2}}\text{,}%
\]
for every $v\in C_{c}^{\infty}\left(  M\right)  $ and for some constant $S>0$.
Assume also that%
\[
\mathrm{vol}\left(  B_{R}\right)  =O\left(  R^{m}\right)  .
\]
Let $0\leq u\in L^{2p},$ $p>1/2$, be a weak solution of the differential
inequality%
\[
\Delta u+a\left(  x\right)  u\geq0.
\]
If $a\in L^{m/2}\cap L^{q}$ for some $q>m/2$ and%
\[
\int_{M\backslash B_{R}}a^{q}=O\left(  \frac{1}{R^{2q-m}}\right)  ,
\]
then%
\[
\sup_{M\backslash B_{R}}u=O\left(  \frac{1}{R^{\alpha}}\right)  ,
\]
for every $\alpha<m-2$.
\end{theorem}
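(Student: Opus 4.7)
The plan is to combine classical Moser iteration on annuli with a bootstrap argument that exploits the hypothesis $a \in L^q$ with $q > m/2$ and its prescribed decay, in order to push the decay exponent of $u$ all the way up to (any value below) the Green's function exponent $m-2$.

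For the Moser part, I would test the weak inequality $\int \langle \nabla u, \nabla \phi\rangle \le \int au\phi$ against $\phi = \eta^2 u^{2s-1}$, with $\eta \ge 0$ a Lipschitz cutoff and $s > 1/2$, to obtain the standard Caccioppoli-type estimate
\[
\int \eta^2 u^{2s-2} |\nabla u|^2 \le C(s) \int \bigl(|\nabla \eta|^2 + a\eta^2\bigr) u^{2s}.
\]
The Sobolev inequality applied to $\eta u^s$ then yields the reverse H\"older-type bound
\[
\Bigl(\int (\eta u^s)^{\frac{2m}{m-2}}\Bigr)^{\frac{m-2}{m}} \le C(s) S^2 \int \bigl(|\nabla \eta|^2 + a\eta^2\bigr) u^{2s}.
\]
On an annulus $A_R = B_{2R}\setminus B_R$ with $\eta$ supported there and $|\nabla \eta| \le C/R$, the potential term is estimated by $\int a\eta^2 u^{2s} \le \|a\|_{L^{m/2}(A_R)} \|\eta u^s\|_{L^{2m/(m-2)}}^2$; since $a \in L^{m/2}(M)$ forces $\|a\|_{L^{m/2}(A_R)} \to 0$, the term can be absorbed into the Sobolev side for $R$ large. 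The resulting reverse H\"older inequality, iterated in the classical Moser fashion with exponents $s_k = p\kappa^k$, $\kappa = m/(m-2)$, delivers the scale-invariant sup estimate
\[
\sup_{A_R} u \le C R^{-m/(2p)} \|u\|_{L^{2p}(B_{4R}\setminus B_{R/2})},
\]
and $u \in L^{2p}(M)$ makes the right-hand side infinitesimal, providing the preliminary decay $\sup_{M\setminus B_R} u = o(R^{-m/(2p)})$.

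The bootstrap would then upgrade this preliminary decay by redoing the Caccioppoli/Sobolev scheme with the sharper H\"older pairing
\[
\int a \eta^2 u^{2s} \le \|a\|_{L^q(A_R)}\, \|\eta u^s\|_{L^{2q/(q-1)}}^2,
\]
in which the condition $q > m/2$ guarantees $2 < 2q/(q-1) < 2m/(m-2)$, so $\|\eta u^s\|_{L^{2q/(q-1)}}$ can be interpolated between $\|\eta u^s\|_{L^2}$ and $\|\eta u^s\|_{L^{2m/(m-2)}}$. The decay $\int_{M\setminus B_R} a^q = O(R^{-(2q-m)})$ then gives $\|a\|_{L^q(A_R)} = O(R^{-(2-m/q)})$, which is precisely the $R^{-2}$-type scaling needed to compete with $|\nabla \eta|^2$ and produce a strict gain in the decay exponent at each iteration step. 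The natural ceiling $\beta = m-2$ is exactly the Newtonian potential decay $r^{-(m-2)}$ in $\mathbb{R}^m$, so iterating until arbitrarily close to the ceiling yields $\sup_{M\setminus B_R} u = O(R^{-\alpha})$ for every $\alpha < m-2$.

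The main obstacle is precisely this bootstrap: a naive combination of Moser iteration with an assumed polynomial bound on $u$ produces no improvement, since the gain $R^{-m/(2p)}$ in the sup estimate is exactly cancelled by the loss $R^{m/(2p)-\beta}$ in the $L^{2p}$ norm over $A_R$. Extracting a genuine improvement therefore requires either inserting weighted test functions of the form $r^\gamma \eta^2 u^{2s-1}$ or employing a Green's function representation of $u$ together with the decay of $au$, and then carefully verifying that the accumulated constants stay bounded while the incremental gain in $\beta$ remains uniformly positive on compact subintervals of $[0,m-2)$.
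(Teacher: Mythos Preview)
The paper does not give its own proof of this theorem: it is quoted from Bando--Kasue--Nakajima \cite{BKN-Invent}, Section~4, and invoked as a black box in Section~\ref{section proof}. So there is nothing in the paper itself to compare your argument against.

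Regarding the proposal on its own merits: your preliminary Moser step and your diagnosis of the obstacle are both correct, and the first of your two suggested fixes---weighted test functions $r^{\gamma}\eta^{2}u^{2s-1}$---is precisely the device used in \cite{BKN-Invent}. But the proposal stops where the real work begins. Naming the weighted test function is not the same as carrying out the estimate: one has to compute the additional terms that the weight $r^{\gamma}$ generates when it hits $\nabla$ and $\Delta$, check that for $0<\gamma<m-2$ they combine with the Euclidean volume upper bound and the $L^{q}$ decay hypothesis on $a$ to give a favorable sign, and then exhibit a uniform positive increment $\beta\mapsto\beta'>\beta$ at each step so that finitely many iterations reach any prescribed $\alpha<m-2$. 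Your final paragraph is an honest acknowledgment that these verifications remain to be done, which makes the proposal an accurate outline of the BKN argument rather than a proof.
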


Now, suppose we are in the assumptions of Theorem \ref{th_abstract} so that
$\psi$ satisfies inequality (\ref{inequality}) and $\psi\in L^{2p}$ for some
$p>(1+A)/2$. As we have already recalled during the proof of Theorem \ref{th_direct}, we have that $\psi^{1+A} \in W^{1,2}_{loc}$; \cite{PRS-Revista}, \cite{PRS-Book}. Therefore, using the test function $(\psi + \varepsilon)^{A-1}\rho$, $0 \leq \rho \in Lip_{c}(M)$, in the weak formulation of (\ref{inequality}) and letting $\varepsilon \to 0 $ yields that $u=\psi^{1+A}$ satisfies
\begin{equation}
\Delta u+bu\geq0, \label{formal}%
\end{equation}
weakly on $M$, where we have set $b\left(  x\right)  =\left(  1+A\right)  a\left(  x\right)
\geq0$; see page 209 in \cite{PRS-Book}. Since $b\in L^{m/2}$ and%
\[
\sup_{M\backslash B_{R}}b=O\left(  \frac{1}{R^{2}}\right)  ,
\]
we deduce that, for every $q>m/2$,%
\begin{align*}
\int_{M\backslash B_{R}}b^{q}  &  \leq\sup_{M\backslash B_{R}}b^{q-m/2}%
\left\Vert b\right\Vert _{L^{m/2}}^{m/2}\\
&  =O\left(  \frac{1}{R^{2q-m}}\right)  .
\end{align*}
Therefore, we can use Theorem \ref{th_BKN} to deduce%
\[
\sup_{M\backslash B_{R}}u=O\left(  \frac{1}{R^{\alpha}}\right)  ,
\]
for every $\alpha<m-2.$ This means that the original solution $\psi$ of
(\ref{inequality}) satisfies%
\[
\sup_{M\backslash B_{R}}\psi=O\left(  \frac{1}{R^{\frac{m-2}{1+A}-\varepsilon
}}\right)  ,
\]
for every $0<\varepsilon<<1$. Thus, using the co-area formula, the volume
assumption, the fact that $H\geq A+1$, and integrating by parts,
\begin{align*}
\int_{B_{R}\backslash B_{1}}\psi^{2H}  &  =\int_{1}^{R}\int_{\partial B_{t}%
}\psi^{2H}\\
&  \leq C\int_{1}^{R}\frac{\mathrm{Area}\left(  \partial B_{t}\right)
}{t^{2H\left(  \frac{m-2}{1+A}\right)  -\varepsilon}}\\
&  \leq C\int_{1}^{R}\frac{\mathrm{Area}\left(  \partial B_{t}\right)
}{t^{2m-4-\varepsilon}}\\
&  \leq C_{1}\left\{  \frac{\mathrm{Vol}\left(  B_{R}\right)  }%
{R^{2m-4-\varepsilon}}+\int_{1}^{R}\frac{\mathrm{Vol}\left(  B_{t}\right)
}{t^{2m-3-\varepsilon}}+1\right\} \\
&  \leq C_{2}\left\{  R^{-m+4+\varepsilon}+\int_{1}^{R}t^{-m+3+\varepsilon
}+1\right\} \\
&  \leq C_{3}\left\{  R^{-m+4+\varepsilon}+1\right\}  .
\end{align*}
It follows that%
\[
\int_{B_{R}\left(  o\right)  }\psi^{2H}=o\left(  R^{2}\right)  \text{, as
}R\rightarrow+\infty\text{,}%
\]
for every $m\geq3$. Now, if $H>\left(  1+A\right)  $, since $\lambda
_{1}\left(  \mathcal{L}_{H}\right)  \geq0$, application of Theorem \ref{th_vanishing1} (i)
or Theorem \ref{th_direct} (i) yields that either $a\equiv0$ or $\psi\equiv0$. On the other hand, if
$H=\left(  1+A\right)  $ we can apply Theorem \ref{th_vanishing1} (ii) or Theorem \ref{th_direct} (ii)
to deduce that $\psi$ satisfies (\ref{inequality}) with the equality
sign.
\section{Further applications}

\subsection{Topology of $H$-stable minimal hypersurfaces}

As observed in the introduction, the \textquotedblleft
direct\textquotedblright\ approach has been proposed to face the problem of
the triviality of minimal surfaces with finite total curvature,
\cite{Berard2-Remarque}. We also pointed out that this method is not
suitable to obtain Theorem \ref{th_CSZ} for higher dimensions $m$ because an
upper bound for the integrability exponent arises. Nevertheless, since it
permits to consider the case $H<p$, it reveals useful to obtain topological
informations on minimal surface immersed in non-negatively curved manifolds
without integrability assumptions on $|\mathrm{II}|$, provided $M$ is
$H$-stable. Namely, in the spirit of \cite{SY-CH}, we obtain the following

\begin{theorem}
\label{topo} Let $M^{m}$ be a complete non-compact minimally immersed
hypersurface in a manifold of non-negative sectional curvature. Suppose that
$M$ is $H$-stable, for some $H>\left(  m-1\right)  /m$. If $D\subset M$ is a
compact domain in $M$ with smooth, simply connected boundary, then there is no
non-trivial homomorphism of $\pi_{1}(D)$ into the fundamental group of a
compact manifold with non-positive sectional curvature.
\end{theorem}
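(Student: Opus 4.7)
The plan is to argue by contradiction using harmonic map theory in the spirit of \cite{SY-CH}, applying Theorem \ref{th_direct} as the main vanishing tool. Suppose there exists a non-trivial homomorphism $\rho\colon \pi_1(D)\to \pi_1(N)$. Since $\pi_1(\partial D)$ is trivial and $N$ is compact with non-positive sectional curvature, Hamilton's solution of the Dirichlet problem for harmonic maps into such targets provides a harmonic map $u\colon D\to N$ satisfying $u|_{\partial D}\equiv p_0$ for some $p_0\in N$ and realizing $\rho$ on fundamental groups. The non-triviality of $\rho$ forces $u$ to be non-constant.

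The next step is to verify that $\psi:=|du|$ satisfies a differential inequality of the form (\ref{inequality}). By the Bochner formula for harmonic maps, combined with non-positivity of the sectional curvature of $N$ and the Gauss equation for the minimal hypersurface $M$ in a non-negatively curved ambient (which yields $\mathrm{Ric}^M\geq -|\mathrm{II}|^2 g$), one has
\[
\tfrac{1}{2}\Delta|du|^2\geq |\nabla du|^2-|\mathrm{II}|^2\,|du|^2.
\]
Invoking the refined Kato inequality for harmonic maps from an $m$-dimensional domain, $|\nabla du|^2\geq \tfrac{m+1}{m}|\nabla |du||^2$ (see \cite{CGH}), I would then conclude that $\psi$ satisfies, on the interior of $D$,
\[
\psi\Delta\psi+|\mathrm{II}|^2 \psi^{2}-\tfrac{1}{m}|\nabla\psi|^2\geq 0,
\]
which is precisely (\ref{inequality}) with $a=|\mathrm{II}|^2\geq 0$ and $A=-1/m$; thus $1+A=(m-1)/m$.

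I would then extend $\psi$ by zero on $M\setminus D$. Since $\psi$ is bounded with compact support, $\int_{B_R(o)}\psi^{2p}=O(1)=o(R^{2})$ for every $p>0$. The hypothesis $H>(m-1)/m=1+A$ guarantees that the roots $p_0<p_1$ of $q^{2}-2Hq+H(1+A)=0$ are distinct, and one may take $p$ with $p_0<p<p_1$ (for instance $p=H$ itself, the midpoint). Theorem \ref{th_direct}(i) would then force $\psi$ to be constant on $M$; since $\psi$ vanishes outside $D$, necessarily $\psi\equiv 0$, so $u$ is constant on $D$, contradicting the non-triviality of $\rho$.

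The main obstacle I expect is justifying rigorously the last step, since the extension of $\psi$ by zero is generally discontinuous across $\partial D$: by the Dirichlet condition the tangential part of $du$ vanishes on $\partial D$, but the normal part $|\partial_\nu u|$ does not, so (\ref{inequality}) only holds in the interior of $D$, not immediately in the distributional sense on $M$. A careful treatment can be obtained either by directly adapting the Caccioppoli argument in the proof of Theorem \ref{th_direct}, tracking the boundary contribution along $\partial D$, which by harmonicity of $u$ and the Dirichlet condition reduces to a term involving the mean curvature of $\partial D$ in $M$, or by smooth approximation of $u$ by maps $u_\varepsilon\colon M\to N$ which agree with $u$ on $D$ and interpolate smoothly to $p_0$ in an $\varepsilon$-collar of $\partial D$, and then passing to the limit $\varepsilon\to 0$ taking advantage of the vanishing volume of the collar.
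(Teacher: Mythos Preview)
Your strategy diverges from the paper's precisely at the point you flag as problematic, and the gap there is real. The paper never tries to extend $|du|$ by zero across $\partial D$. Following \cite{SY-CH}, one instead extends the \emph{map}: since $\partial D$ is simply connected and $N$ (compact, non-positively curved) is aspherical, a continuous map $D\to N$ inducing $\rho$ can be taken constant on $\partial D$ and then extended by that constant to all of $M$, yielding a finite-energy map $f:M\to N$. One then invokes the existence theory for harmonic maps of finite energy from complete manifolds into non-positively curved compact targets to homotope $f$ to a harmonic $h:M\to N$ with $|dh|\in L^{2}(M)$. Now $\psi=|dh|$ is globally $Lip_{loc}$, satisfies (\ref{inequality}) everywhere on $M$, and Theorem~\ref{th_direct}(i) applies directly with $p=1$. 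Note also that the paper uses the refined Kato constant $m/(m-1)$ for harmonic maps (hence $A=-1/(m-1)$), not your $(m+1)/m$; with $A=-1/(m-1)$ the requirement that $p=1$ lie strictly between the roots of $q^{2}-2Hq+H(A+1)=0$ is exactly $H>(m-1)/m$, which is the origin of the threshold in the statement.

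Your extension by zero is not in $Lip_{loc}(M)$, since $|\partial_{\nu}u|$ need not vanish on $\partial D$; hence (\ref{inequality}) fails distributionally across $\partial D$ and Theorem~\ref{th_direct} is not applicable as stated. The proposed repairs do not close the gap. Carrying the boundary through the Caccioppoli argument leaves a term of the form $\int_{\partial D}\rho^{2}\psi^{2p-1}\partial_{\nu}\psi$, whose sign is not controlled by harmonicity or the Dirichlet condition alone. And the collar interpolation is ill-posed as you describe it: $u$ already equals $p_{0}$ on $\partial D$, so matching to $p_{0}$ in an exterior collar does nothing to smooth the normal derivative; any genuine smoothing must take place \emph{inside} $D$, which destroys harmonicity exactly where the Bochner inequality is needed. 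The paper's route, via a globally defined finite-energy harmonic map, sidesteps all of this.
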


The proof relies on a suitable use of harmonic maps; see also Remark 6.22 in
\cite{PRS-Book}. Consider a complete Riemannian manifold $M$ whose Ricci
curvature satisfies
\begin{equation}
{}^{M}Ric\geq-k(x), \label{ric}%
\end{equation}
for some continuous function $k\geq0$ and let $h:M\rightarrow N$ be a harmonic
map with finite energy $|dh|\in L^{2}$. In $N$ has non-positive curvature,
then the energy density satisfies the Bochner inequality%
\[
\frac{1}{2}\Delta\left\vert dh\right\vert ^{2}+k\left(  x\right)  \left\vert
dh\right\vert ^{2}\geq\left\vert Ddh\right\vert ^{2}.
\]
Furthermore, the following refined Kato inequality holds true, \cite{Br},
\cite{CGH},%
\[
|Ddh|^{2}\geq\frac{m}{m-1}|\nabla|dh||^{2}.
\]
Therefore $\psi=\left\vert dh\right\vert $ satisfies (\ref{inequality}) with
$A=-1/\left(  m-1\right)  $. Since every map $f:M\rightarrow N$ with finite
energy $\left\vert df\right\vert \in L^{2}$ is homotopic to a harmonic map
$h:M\rightarrow N$ of finite energy, applying to $h$ the vanishing result of
Theorem \ref{th_direct}, we get

\begin{proposition}
\label{harmonic} Let $f:M\rightarrow N$ be a continuous map from a complete, $m$-dimensional
manifold $\left(  M,\left\langle ,\right\rangle \right)  $ with Ricci
curvature satisfying (\ref{ric}) into a compact manifold of non-positive
sectional curvature. Assume that $f$ has finite energy $\left\vert
df\right\vert ^{2}\in L^{1}\left(  M\right)  $. If
\[
\lambda_{1}\left(  -\Delta-Hk\left(  x\right)  \right)  \geq0,
\]
for some
\[
H>(m-1)/m,
\]
then $f$ is homotopic to a constant.
\end{proposition}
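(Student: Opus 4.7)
The plan is to replace $f$ by a harmonic representative in its homotopy class and then apply Theorem \ref{th_direct} to its energy density. By standard existence theory for finite-energy harmonic maps into a compact non-positively curved target (Burstall; Hamilton; Schoen-Yau; Li-Tam), $f$ is homotopic to a smooth harmonic map $h:M\to N$ of finite energy, so it suffices to prove that $h$ is constant.

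Setting $\psi=|dh|$, the Bochner formula for $h$ harmonic into a non-positively curved target, combined with the lower Ricci bound $\mathrm{Ric}^{M}\geq-k$ and the refined Kato inequality $|Ddh|^{2}\geq\tfrac{m}{m-1}|\nabla\psi|^{2}$ recalled just before the statement, yields
\[
\psi\Delta\psi+k\psi^{2}-\tfrac{1}{m-1}|\nabla\psi|^{2}\geq 0,
\]
which is exactly (\ref{inequality}) with $a=k\geq 0$ and $A=-1/(m-1)$. This brings us into the setting of Theorem \ref{th_direct}.

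Next I would check that the hypotheses of Theorem \ref{th_direct}(i) are satisfied with the exponent $p=1$. The inequality $H>(m-1)/m$ gives $H>A+1=(m-2)/(m-1)$ strictly and, equivalently, $H(1-A)>1$; evaluating $q^{2}-2Hq+H(A+1)$ at $q=1$ produces $1-H(1-A)<0$, which is precisely the condition that $p=1$ lies strictly between the two roots $p_{0}\leq p_{1}$. Since $\psi\in L^{2}(M)$ by the finite-energy hypothesis, $\int_{B_{R}}\psi^{2}=O(1)=o(R^{2})$ is automatic. Theorem \ref{th_direct}(i) then forces $\psi\equiv c$ for a constant $c\geq 0$.

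The main obstacle is the final step: ruling out $c>0$, since Theorem \ref{th_direct}(i) only delivers constancy of $\psi$ and does not, by itself, force $\psi\equiv 0$. If $c>0$, then $\int_{M}|dh|^{2}=c^{2}\,\mathrm{vol}(M)<+\infty$ forces $M$ to have finite volume; testing the Rayleigh quotient of $\mathcal{L}_{H}$ against a sequence of cutoffs approximating the constant $1$ would then give $\int_{M}k\leq 0$, hence $k\equiv 0$ and $\mathrm{Ric}^{M}\geq 0$. The Calabi-Yau lower volume estimate for complete non-compact manifolds of non-negative Ricci then furnishes the desired contradiction. Once $c=0$ is established, $h$ is a constant map and so $f$ is null-homotopic.
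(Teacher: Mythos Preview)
Your proof is correct and follows exactly the route the paper sketches: pass to a finite-energy harmonic representative, use the Bochner formula together with the refined Kato inequality to place $\psi=|dh|$ in the setting of Theorem~\ref{th_direct} with $A=-1/(m-1)$ and $p=1$, and conclude that $\psi$ is constant. Your additional argument eliminating the possibility $c>0$ (finite volume plus the spectral hypothesis forces $k\equiv 0$, which then contradicts the Calabi--Yau linear volume lower bound) makes explicit a step the paper leaves tacit; note that this last step, and hence the proposition itself, tacitly requires $M$ to be non-compact, as in the intended application Theorem~\ref{topo}.
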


Now, suppose that $M$ is isometrically immersed as complete, $H$-stable,
minimal hypersurface into a space $Q$ with ${}\operatorname{Sect}_{Q}\geq0$.
According to Gauss equations, ${}^{M}Ric\geq-|\mathrm{II}|^{2}%
$. Moreover, by assumption, the operator $\mathcal{L}_{H}=-\Delta-H|\mathrm{II}|^{2}$
satisfies $\lambda_{1}(\mathcal{L}_{H})\geq0$. Hence, we are precisely in the
assumptions of Proposition \ref{harmonic}, thus obtaining that each harmonic
map with finite energy from $M$ into a non-positively curved $N$ is homotopic
to a constant. Starting from this point, we can conclude the proof of Theorem
\ref{topo} proceeding as in \cite{SY-CH}. See also \cite{PRS-Book}, Theorem 6.21.

\subsection{Locally conformally flat manifolds}

As expected, Theorem \ref{th_abstract} works perfectly in the setting of
conformally flat manifolds. Recall that the $m$-dimensional Riemannian
manifold $\left(  M^{m},\left\langle ,\right\rangle \right)  $ is said to be
locally conformally flat if every point of $M$ has a neighborhood which is
conformally immersed into the standard sphere $\mathbb{S}^{m}$. We have the
following result which, in our opinion, fits very well in the nice works by G.
Carron and M. Herzlich, \cite{CH-Commentarii}, and, G. Tian and J. Viaclovsky,
\cite{TV1-Invent}, \cite{TV-CommMathHelv}.

\begin{theorem}
Let $\left(  M,\left\langle ,\right\rangle \right)  $ be a complete, simply
connected, scalar flat, locally conformally flat Riemannian manifold of
dimension $m\geq3$. Assume that $\left\vert Ric\right\vert \in L^{m/2}$ and
that the Schr\"{o}dinger operator%
\[
\mathcal{L}_{H}=-\Delta-H\sqrt{\frac{m}{m-1}}\left\vert Ric\right\vert
\]
satisfies $\lambda_{1}\left(  \mathcal{L}_{H}\right)  \geq0$ for some
$H\geq\left(  m-2\right)  /m$. Then:

\begin{enumerate}
\item[(i)] If $H>\left(  m-2\right)  /m$ then $M=\mathbb{R}^{m}$.

\item[(ii)] If $H=\left(  m-2\right)  /m$ then, either $M=\mathbb{R}^{m}$ or
$\left\vert Ric\right\vert >0$ and, in a neighborhood $U$ of each point
$x\in\left\{  DRic\neq0\right\}  \neq\emptyset$, there is an isometric
splitting $U=\left(  -\varepsilon,\varepsilon\right)  \times_{f}N$ where $N$
has constant curvature $K$ and $f$ satisfies%
\[
\left(  m-2\right)  \left(  f^{\prime}\right)  ^{2}+2ff^{\prime\prime
}-K\left(  m-2\right)  =0.
\]

\end{enumerate}
\end{theorem}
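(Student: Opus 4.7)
The plan is to apply Theorem \ref{th_abstract} to $\psi := |Ric|$ with potential $a(x) = \sqrt{\frac{m}{m-1}}\,|Ric|$ and refined Kato constant $A = -2/m$; with these choices the identity $1+A = (m-2)/m$ matches exactly the critical threshold for $H$ in the statement. The first step is to verify that $\psi$ is a weak subsolution of \eqref{inequality}. In the scalar flat, locally conformally flat setting, the Ricci tensor is divergence-free (contracted second Bianchi identity) and the Weyl tensor vanishes, so the Bochner formula for $|Ric|^{2}$ combined with the standard algebraic estimate on the cubic curvature term (see for instance \cite{CH-Commentarii} or \cite{PRS-Book}) yields
\[
\tfrac{1}{2}\Delta|Ric|^{2} \geq |DRic|^{2} - \sqrt{\tfrac{m}{m-1}}\,|Ric|^{3}.
\]
Coupling this with the refined Kato inequality $|DRic|^{2} \geq \tfrac{m+2}{m}|\nabla|Ric||^{2}$ for the covariant derivative of a traceless, divergence-free symmetric $2$-tensor \cite{CGH}, and using the identity $\psi\Delta\psi = \tfrac{1}{2}\Delta\psi^{2} - |\nabla\psi|^{2}$, gives \eqref{inequality} in the weak sense with the claimed $a$ and $A$.

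The next step is to verify the remaining structural hypotheses of Theorem \ref{th_abstract}. Under our assumptions, the theory developed by Carron--Herzlich \cite{CH-Commentarii} for simply connected, scalar flat, locally conformally flat manifolds with $|Ric| \in L^{m/2}$ provides, via the developing map, both the Euclidean Sobolev inequality and the Euclidean volume growth $\operatorname{vol}(B_{R}) = O(R^{m})$. The uniform decay $a(x) = O(R^{-2})$ is then obtained by a standard Moser iteration applied to the semilinear inequality $\Delta|Ric|^{2} + c\,|Ric|^{3} \geq 0$ in combination with the Sobolev inequality, in the spirit of \cite{CH-Commentarii, PV-DJA, V-uniform}. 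Finally, since $m \geq 3$, the hypothesis $|Ric| \in L^{m/2}$ already ensures the integrability $\psi \in L^{2p}$ with $p = m/4 > (1+A)/2 = (m-2)/(2m)$.

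With all hypotheses in place, the conclusion follows from Theorem \ref{th_abstract}. If $H > (m-2)/m$, part (i) forces either $a \equiv 0$ or $\psi \equiv 0$; either way $|Ric| \equiv 0$, and a complete, simply connected, Ricci flat, locally conformally flat manifold is flat, so $M$ is isometric to $\mathbb{R}^{m}$. In the critical case $H = (m-2)/m$, part (ii) produces equality in \eqref{inequality}, which, tracing back the chain of inequalities used above, amounts to simultaneous equality in both the Bochner formula and the refined Kato inequality for $Ric$ wherever $|Ric| > 0$. If $|Ric|$ vanishes at some point, the strong maximum principle applied to $\Delta|Ric|^{2} + c\,|Ric|^{3} \geq 0$ on the connected manifold $M$ propagates the zero set and forces $|Ric| \equiv 0$, yielding again $M = \mathbb{R}^{m}$; otherwise $|Ric| > 0$ everywhere and the analysis continues on the (assumed non-empty) open set $\{DRic \neq 0\}$.

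The main obstacle is this last step: extracting the local warped product splitting and the ODE for $f$ from the equality case of the refined Kato inequality. Equality in refined Kato is a very rigid algebraic condition, and the classification of its equality cases (see \cite{CGH}) forces $Ric$, at each point of $\{DRic \neq 0\}$, to have exactly two distinct eigenvalues of multiplicities $1$ and $m-1$, with $\nabla|Ric|$ spanning the one-dimensional eigenspace. Combining this algebraic structure with the vanishing Weyl tensor and with the divergence-free condition on $Ric$ should then imply that the $(m-1)$-dimensional distribution orthogonal to $\nabla|Ric|$ is integrable, with totally umbilical leaves of constant sectional curvature $K$, producing the local isometric splitting $U = (-\varepsilon,\varepsilon) \times_{f} N$. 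The ODE $(m-2)(f')^{2} + 2 f f'' - K(m-2) = 0$ is then precisely the expression of scalar flatness for such a warped product metric.
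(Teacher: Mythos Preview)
Your treatment of part (i) is essentially the paper's proof: same choice of $\psi$, $a$, $A$, same reduction to Theorem \ref{th_abstract}. One minor correction: the refined Kato constant $\frac{m+2}{m}$ holds because $Ric$ is a \emph{Codazzi} tensor in the scalar-flat, locally conformally flat setting (the Cotton tensor vanishes), not merely because it is divergence-free. Also, the paper obtains the Sobolev inequality from Kuiper and Schoen--Yau via the Yamabe constant, and the Euclidean volume growth from Tian--Viaclovsky after first establishing $|Riem|=o(R^{-2})$; your citation of \cite{CH-Commentarii} for both is imprecise but not fatal.

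Part (ii), however, has two genuine gaps. First, your strong maximum principle step is incorrect as written: from the \emph{inequality} $\Delta|Ric|^{2}+c|Ric|^{3}\geq 0$ one cannot conclude that a zero of $|Ric|$ propagates (a nonnegative subsolution may well touch zero). The paper instead observes that equality in \eqref{inequality} means $v=|Ric|^{(m-2)/m}$ is a weak \emph{solution} of $\Delta v + b(x)v=0$, and then applies the strong minimum principle to this genuine elliptic equation. Second, you attribute the two-eigenvalue structure of $Ric$ to the equality case of the refined Kato inequality. That equality constrains $DRic$, not the spectrum of $Ric$. The correct source is the equality case of the Okumura inequality
\[
\mathrm{tr}\bigl(Ric^{(3)}\bigr)\ \geq\ -\frac{m-2}{\sqrt{m(m-1)}}\,|Ric|^{3},
\]
which characterizes precisely the situation where $(m-1)$ eigenvalues coincide.

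Two further points are missing from your outline. You assume $\{DRic\neq 0\}\neq\emptyset$ without justification; the paper rules out $DRic\equiv 0$ by noting that a parallel $Ric$ has constant norm, which together with $|Ric|\in L^{m/2}$ and $\mathrm{vol}(M)=+\infty$ forces $Ric\equiv 0$. Finally, the local warped product structure is not obtained by an ad hoc integrability argument but by invoking Derdzi\'nski's theorem on Codazzi tensors with constant trace and two distinct eigenvalues; the constant curvature of the fibre $N$ then follows from local conformal flatness, and the ODE from scalar flatness, as you correctly say.
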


\begin{proof}
Our basic reference for conformally flat manifolds is Chapter 6 in Schoen and
Yau book \cite{SY-Book}. See also Section 9 in \cite{PRS-Book} for the
relevant PDEs involving the traceless Ricci tensor of a conformally flat manifold.

It is a classical result by N. Kuiper that the simply connected, locally
conformally flat $M^{m}$ has a conformal immersion into $\mathbb{S}^{m}$.
Therefore, according to Schoen and Yau, the Yamabe constant of $M$ satisfies
$Q\left(  M\right)  =Q\left(  \mathbb{S}^{m}\right)  >0$. Since $M$ is
scalar-flat, this means that $M$ enjoys the Sobolev inequality%
\begin{equation}
\left\Vert v\right\Vert _{L^{\frac{2m}{m-2}}}\leq S\left\Vert \nabla
v\right\Vert _{L^{2}},\label{confflat-sobolev}%
\end{equation}
with%
\[
S=\sqrt{\frac{4\left(  m-1\right)  }{Q\left(  \mathbb{S}^{m}\right)  \left(
m-2\right)  }}.
\]
Now, the norm of the Ricci tensor of $M$ satisfies the Simons-type identity%
\[
\frac{1}{2}\Delta\left\vert Ric\right\vert ^{2}=\left\vert DRic\right\vert
^{2}+\frac{m}{m-2}\mathrm{tr}\left(  Ric^{\left(  3\right)  }\right)  ,
\]
where $Ric^{\left(  3\right)  }$ denotes the third composition power of the
Ricci tensor. Moreover, since $Ric$ is scalar flat we have the classical
Okumura inequality,%
\[
\mathrm{tr}\left(  Ric^{\left(  3\right)  }\right)  \geq-\frac{m-2}%
{\sqrt{m\left(  m-1\right)  }}\left\vert Ric\right\vert ^{3},
\]
the equality holding at some point $x$ if and only if
$(m-1)$ eigenvalues of $Ric\left(  x\right)  $ coincide,
\cite{AdC-PAMS}. Finally, since $Ric$ is a Codazzi tensor the refined Kato
inequality%
\[
\left\vert DRic\right\vert ^{2}\geq\frac{m+2}{m}\left\vert \nabla\left\vert
Ric\right\vert \right\vert ^{2},
\]
holds. Summarizing, we have%
\begin{equation}
\left\vert Ric\right\vert \left(  \Delta\left\vert Ric\right\vert +\sqrt
{\frac{m}{m-1}}\left\vert Ric\right\vert ^{2}\right)  \geq\frac{2}%
{m}\left\vert \nabla\left\vert Ric\right\vert \right\vert ^{2}%
,\label{confflat1}%
\end{equation}
weakly on $M$. In particular, $u=\left\vert Ric\right\vert $ satisfies the
semilinear elliptic inequality%
\[
\Delta u+\sqrt{m/\left(  m-1\right)  }u^{2}\geq0.
\]
When combined with the Sobolev inequality this yields that $\left\vert
Ric\right\vert =o\left(  R^{-2}\right)  $ as $R\rightarrow+\infty$ (see, e.g.
\cite{CH-Commentarii}, Lemma 5.2). Since $M$ is locally conformally flat and
scalar flat, by the decomposition of the Riemann tensor we also have
$\left\vert Riem\right\vert =o\left(  R^{-2}\right)  $. It follows from the
volume growth estimates by Tian-Viaclovsky that $\mathrm{vol}\left(
B_{R}\right)  =O\left(  R^{m}\right)  $. Therefore, if $H>\left(  m-2\right)
/m$, we can apply Theorem \ref{th_abstract} (i) with the choices
$\psi=\left\vert Ric\right\vert $, $a\left(  x\right)  =\sqrt{m/\left(
m-1\right)  }\left\vert Ric\right\vert $, $A=-2/m$ to conclude that either
$a\equiv0$ or $\psi\equiv0$. In any case $Ric=0$ which, in turn, forces
$Riem=0$. The desired conclusion $M=\mathbb{R}^{m}$ then follows from the Hopf
classification theorem.
On the other hand, suppose $H=\left(  m-2\right)  /m$.
Assume also that $M\neq\mathbb{R}^{m}$, for otherwise there is nothing to
prove. Application of Theorem \ref{th_abstract} (ii) \ yields the equality in
(\ref{confflat1}), i.e.,%
\begin{equation}
\left\vert Ric\right\vert \left(  \Delta\left\vert Ric\right\vert +\sqrt
{\frac{m}{m-1}}\left\vert Ric\right\vert ^{2}\right)  =\frac{2}{m}\left\vert
\nabla\left\vert Ric\right\vert \right\vert ^{2}.\label{conflat2}%
\end{equation}
From the equality case in Okumura inequality we obtain that $Ric$ has
two (possibly equal) eigenvalues $\mu$ and $-\mu/\left(  m-1\right)  $ of multiplicity $1$ and
$\left(  m-1\right)  ,$ respectively. We claim that they are everywhere
distinct. Indeed, using (\ref{conflat2}) and arguing as in Section
\ref{section proof}, we see that $v=\left\vert Ric\right\vert ^{\left(
m-2\right)  /m}$ is a weak solution of%
\[
\Delta v+b\left(  x\right)  v=0,
\]
where $b\left(  x\right)  =(1-2/m)\sqrt{m/\left(  m-1\right)  }\left\vert
Ric\right\vert $. It follows from the strong minimum principle that either
$v>0$ or $v\equiv0$. The second possibility cannot occur because, in this
case, $Riem=0$ and  $M=\mathbb{R}^{m}$, against our assumption. Thus
$\left\vert Ric\right\vert >0$ and this forces $\mu\neq-\mu/\left(
m-1\right)  $, as claimed.

Now, we observe that $Ric$ is\ not parallel. Indeed, suppose the contrary.
Since $\left\vert Ric\right\vert $ is constant, $\left\vert Ric\right\vert \in
L^{m/2}$ and, due to the Sobolev inequality (\ref{confflat-sobolev}),
$\mathrm{vol}\left(  M\right)  =+\infty$, then $Ric=0$. As above, this implies
$M=\mathbb{R}^{m}$, contradicting our initial assumption. Therefore,  $Ric$ is
not parallel. Since $Ric$ is a Codazzi tensor with constant (zero) trace and
two distinct eigenvalues, we can apply a result by A. Derdzinski, \cite{De},
and conclude that every point $x\in\left\{  DRic\neq0\right\}  $ has a
neighborhood of the form $U=\left(  -\varepsilon,\varepsilon\right)
\times_{f}\Sigma$, with $f$ nonconstant. Since $U$ is locally conformally flat
then the $\left(  m-1\right)  $-dimensional manifold $\Sigma$ must be of
constant curvature, \cite{BGV}, say $^{\Sigma}Sec=K$. Computing the scalar
curvature of the warped product $U$, and recalling that $M$ is scalar flat, we
get%
\[
\left(  m-2\right)  \left(  f^{\prime}\right)  ^{2}+2ff^{\prime\prime
}-K\left(  m-2\right)  =0,
\]
thus completing the proof.
\end{proof}

\subsection*{Acknowledgement}
The authors are grateful to A.G. Setti for many stimulating conversations and for several important remarks. They would also like to thank the anonymous referee for having pointed out some oversights in the proof of Theorem \ref{th_direct}.
\bigskip

\end{document}